\numberwithin{equation}{section}
\numberwithin{figure}{section}
\numberwithin{table}{section}
\newtheorem{theorem}{Theorem}[section]
\newtheorem{corollary}{Corollary}[section]
\newtheorem{proposition}{Proposition}[section]
\newtheorem{statement}{Statement}[section]
\newtheorem{example}{Example}[section]
\newtheorem{definition}{Definition}[section]
\newtheorem{note}{Note}[section]
\newtheorem{problem}{Problem}[section]
\newenvironment{proof}[1][Proof]{\noindent\textbf{#1.} }{\ \rule{0.5em}{0.5em} \medskip }
\newcommand{\ignore}[1]{}
\begin{document}

\begin{center}

{\Large \bf Revisiting Gr\"uss's inequality: covariance bounds, \break 
QDE but not QD copulas, and central moments}

\vspace*{7mm}

\textbf{Mart\'in Egozcue}
\\
\smallskip
\textit{Department of Economics, University of Montevideo,
Montevideo 11600, Uruguay,
and
\\
Accounting and Finance Department, Norte Construcciones,
Punta del Este,
\\
Maldonado 20100, Uruguay.} E-mail: \texttt{megozcue@correo.um.edu.uy}
\bigskip

\textbf{Luis Fuentes Garc\'ia}
\\
\smallskip
\textit{Departamento de M\'etodos Matem\'aticos e de Representaci\'on,
Escola T\'ecnica Superior de Enxe\~neiros de Cami\~nos, Canais e Portos,
Universidade da Coru\~na, 15001 A Coru\~na, Spain.} E-mail: \texttt{lfuentes@udc.es}
\bigskip

\textbf{Wing-Keung Wong}
\\
\smallskip
\textit{Department of Economics and the Institute for Computational Mathematics,
Hong Kong Baptist University, Kowloon Tong, Hong Kong.}
E-mail: \texttt{awong@hkbu.edu.hk}
\bigskip

\textbf{Ri\v{c}ardas Zitikis}\footnote{Corresponding author.
Tel: 519-432-7370; Fax: 519-661-3813; E-mail: zitikis@stats.uwo.ca}
\\
\smallskip
\textit{Department of Statistical and Actuarial Sciences,
              University of Western Ontario, London,
              Ontario N6A 5B7, Canada.}
              E-mail: \texttt{zitikis@stats.uwo.ca}
\end{center}

\newpage

\section*{Abstract}

Since the pioneering work of Gerhard Gr\"uss dating back to 1935,
Gr\"uss's inequality and, more generally, Gr\"uss-type
bounds for covariances have fascinated researchers and found
numerous applications in areas such as economics, insurance, reliability,
and, more generally, decision making under uncertainly.
Gr\"uss-type bounds for covariances have been established mainly under
most general dependence structures, meaning no restrictions
on the dependence structure between the two underlying random variables.
Recent work in the area has revealed a potential for improving
Gr\"uss-type bounds, including the original Gr\"uss's bound,
assuming dependence structures such as quadrant dependence (QD). In this paper
we demonstrate that the relatively little explored notion
of `quadrant dependence in expectation' (QDE)
is ideally suited in the context of bounding covariances,
especially those that appear in the aforementioned areas of application.
We explore this research avenue in detail,
establish general Gr\"uss-type bounds, and illustrate them
with newly constructed examples of
bivariate distributions, which are not QD but, nevertheless, are QDE.
The examples rely on specially devised copulas. We supplement the examples with
results concerning general copulas and their convex combinations.
In the process of deriving Gr\"uss-type bounds, we also establish new bounds
for central moments, whose optimality is demonstrated.

\bigskip
\noindent
{\bf Keywords and phrases}:
Gr\"uss's inequality,
covariance bound,
Hoeffding representation,
Cuadras representation,
quadrant dependence,
quadrant dependence in expectation,
copula,
convex combination,
Archimedean copula,
Fr\'echet copula,
Farlie-Gumbel-Morgenstern copula,
central moments,
Edmundson-Madansky bound.

\newpage

\section{Introduction}
\label{introduction}

The covariance, say $\mathbf{Cov}[V,W]$ between
two random variables $V$ and $W$, has played pivotal roles in
numerous areas such as economics, finance, insurance, statistics,
and, more generally, in decision making under uncertainty. For
details on specific applications with references
to many works in the areas  that have greatly influenced our current research,
we refer to
Broll et al. \cite{BrollEtAl-IMAJMM},
Egozcue et al. \cite{EgozcueEtAl-JIA}, \cite{EgozcueEtAl-IMAJMM},
Furman and Zitikis \cite{furman-zitikis-IME-1},
\cite{furman-zitikis-IME-2}, \cite{furman-zitikis-naaj},
Zitikis \cite{Zitikis}.
A number of mathematics problems, especially those
related to the theory of functions, have also been successfully tackled
with the aid of covariance-type considerations (see, e.g.,
Dragomir and Agarwal \cite{DragomirAgarwal},
Dragomir and Diamond \cite{DragomirDiamond},
Furman and Zitikis \cite{furman-zitikis-jipam}, \cite{furman-zitikis-jmaa},
Izumino and Pe\v{c}ari\'c \cite{IzuminoPecaric},
Izumino et al. \cite{IzuminoPecaricTepes}).
Solutions to problems in these areas often rely on determining the sign
of covariances as well as on establishing their lower and upper bounds.

The random variables $V$ and $W$ are often unobservable but
are known to be transformations (also called distortions) of some
observable random variables $X$ and $Y$; that is,
$V=\alpha(X)$ and $W=\beta(Y)$ for some functions
$\alpha, \, \beta: \mathbf{R} \to \mathbf{R}$. Consequently, the covariance
$\mathbf{Cov}[\alpha(X),\beta(Y)]$ becomes of interest.
In a large number of applications,
only one of the two random variables is distorted. In this paper we
concentrate on this case, thus restricting ourselves to an in-depth analysis
of the covariance
\begin{equation}
\mathbf{Cov}[X,\beta(Y)].
\label{cov-000}
\end{equation}
If compared to the more general covariance $\mathbf{Cov}[\alpha(X),\beta(Y)]$,
this reduction of generality plays a significant role in providing us with
additional technical tools, including the notion of `quadrant
dependence in expectation' (QDE) to be defined in
Section \ref{preliminaries} below, and thus in turn allows us to establish
deeper results than those available in the literature under, say,
the notion of quadrant dependence (QD). In applications where
covariance (\ref{cov-000}) emerges,
the distortion function $\beta $  might be, for example,
a utility or value function (see, e.g.,
Broll et al. \cite{BrollEtAl-IMAJMM},
Egozcue et al. \cite{EgozcueEtAl-JIA}, \cite{EgozcueEtAl-IMAJMM},
and references therein), some insurance-premium loading function
(see, e.g., Furman and Zitikis \cite{furman-zitikis-IME-1},
\cite{furman-zitikis-IME-2}, \cite{furman-zitikis-naaj};
Sendov et al. \cite{SeondovEtAl-new},
and references therein).

When estimating covariance (\ref{cov-000}), perhaps most naturally
that comes first into our mind is the Cauchy-Schwarz inequality
\begin{equation}
\big |\mathbf{Cov}[X,\beta(Y)] \big | \le \sqrt{\mathbf{Var}[X]}
\,\sqrt{\mathbf{Var}[\beta(Y)]}\, ,
\label{cov-cauchy}
\end{equation}
where $\mathbf{Var}[X]$ is the variance (i.e., $\mathbf{Cov}[X,X]$)
of the random variable $X$. Furthermore, assuming that there are
finite intervals $[a,A]$ and $[b,B]$ such that
$X\in [a,A]$ and $\beta(Y)\in [b,B]$ almost surely,
from bound (\ref{cov-cauchy}) we immediately obtain
Gr\"uss's \cite{Gruss} inequality
\begin{equation}
\big |\mathbf{Cov}[X,\beta(Y)] \big | \le {(A-a)(B-b)\over 4}
\label{cov-5e}
\end{equation}
(see, e.g., Zitikis \cite{Zitikis} for details and references).
Inequalities (\ref{cov-cauchy}) and (\ref{cov-5e}) hold irrespectively
of the dependence structure
between $X$ and $\beta(Y)$, which implies that the inequalities also
hold under the `worst possible' dependence scenario,
which is associated with the strongest dependence structure
between $X$ and $\beta(Y)$, arising when $X=\beta(Y)$ almost surely.
It is under this scenario that the optimality of the Gr\"uss's bound
has been established in the literature, and we refer to,
e.g., Dragomir \cite{Dragomir}, \cite{Dragomir-book},
Mitrinovi\'c et al. \cite{Mitrinovi},
Steele \cite{Steele},
and Zitikis \cite{Zitikis} for further
notes, examples, and references on the topic.

When the random variables $X$ and $\beta(Y)$
are independent, which in particular happens when
the underlying random variables $X$ and $Y$ are such,
then the covariance $\mathbf{Cov}[X,\beta(Y)]$ is zero.
Hence, knowing how much and in what sense the random variables
$X$ and $Y$ are dependent plays a significant role
when investigating the magnitude
of the covariance $\mathbf{Cov}[X,\beta(Y)]$ and its sign,
among other properties.
This line of research has been advocated by Zitikis \cite{Zitikis} and
Egozcue \textit{et al.} \cite{EgozcueEtAl-JIA}, who have employed the notion
of quadrant dependence to be defined rigorously in
Section \ref{preliminaries} below.

We conclude this section with a guide through the rest of this paper.
In Section \ref{problem}, we first show how the assumption of bivariate
normality leads, via the well-known Stein's Lemma, towards
a Gr\"uss-type covariance bound. We then extend this bivariate normal case into
the formulation of a general Gr\"uss-type covariance bound,
which we aim at establishing in various situations
throughout the current paper.
In Section \ref{preliminaries}, we recall definitions of
QD and QDE and their counterparts for copulas,
and also relate these notions of dependence 
to Gr\"uss-type covariance bounds. In Section \ref{preliminaries}
we also establish general results concerning convex mixtures of
negative quadrant dependent (NQD) and positive quadrant dependent (PQD)
copulas that provide
a basis for constructing bivariate distributions which are QDE but not QD.
We devote Section \ref{copulas} to constructing several illustrative examples of
copulas which are QDE but not QD; as far as we are aware of, these
examples are the first ones in the literature.
In Section \ref{qde-bounds}, we establish QDE-based Gr\"uss-type
bounds for covariance (\ref{cov-000}), discuss their optimality
and highlight the importance of having tight bounds for
central moments of random variables.
We investigate the latter bounds in great detail
in Section \ref{central-moments}.
Since the QDE notion of dependence also naturally leads towards
regression-based considerations, in Section \ref{regression} we
establish regression-based Gr\"uss-type
bounds for covariance (\ref{cov-000}).

\section{Formulation of the problem}
\label{problem}

Applications often suggest models for $(X,Y)$
but it may not be feasible to assume models for the pair $(X,\beta(Y))$ because
the distortion function $\beta $ may change depending on, say,
investor, insurer, etc. For this reason 
it is desirable to separate the underlying
stochastic model, which is based on $(X,Y)$,
from the class of distortion functions $\beta $.

Stein \cite{Stein} noted that if the pair $(X,Y)$
follows the bivariate normal distribution and
the function $\beta$ is differentiable, then
\begin{equation}
\mathbf{Cov}[X,\beta(Y)]=\mathbf{Cov}[X,Y] \mathbf{E}[\beta'(Y)].
\label{cov-10stein}
\end{equation}
This equation, frequently known as Stein's Lemma, separates the dependence structure
from the distortion function $\beta $. For extensions and
generalizations of this result, we refer to
Furman and Zitikis \cite{furman-zitikis-IME-1},
\cite{furman-zitikis-IME-2},
\cite{furman-zitikis-naaj},
\cite{furman-zitikis-astin}, and references therein.
In particular, it has been observed that
equation (\ref{cov-10stein}) is a direct consequence of the following one
\begin{equation}
\mathbf{Cov}[X,\beta(Y)]
={\mathbf{Cov}[X,Y]\over \mathbf{Var}[Y]}\mathbf{Cov}[Y,\beta(Y)],
\label{cov-10i}
\end{equation}
which separates the dependence structure of $(X,Y)$
from the distortion function $\beta $ but does not
require the differentiability of $\beta $.

Now we rewrite equation (\ref{cov-10i}) in the form
\begin{equation}
\mathbf{Cov}[X,\beta(Y)]
=\mathbf{Corr}[X,Y]\, \mathbf{G}_0[X,Y,\beta ],
\label{cov-10ii}
\end{equation}
which we call to be of the `Gr\"uss form' for reasons to be made clear below
(Problem \ref{mainproblem} below),
where $\mathbf{Corr}[X,Y]$ is the Pearson correlation coefficient
between $X$ and $Y$, and $\mathbf{G}_0[X,Y,\beta ]$ is a `Gr\"uss factor'
defined by
\[
\mathbf{G}_0[X,Y,\beta ]=
\sqrt{{\mathbf{Var}[X]\over \mathbf{Var}[Y]}}\, \mathbf{Cov}[Y,\beta(Y)].
\]
Note that the Gr\"uss factor $\mathbf{G}_0[X,Y,\beta ]$
does not depend on the bivariate distribution of $(X,Y)$ except that it depends on
the cumulative distribution functions (cdf) $F$ and $G$ of the underlying random
variables $X$ and $Y$, respectively, and also on the distortion function $\beta $.
By the Cauchy-Schwarz inequality, $|\mathbf{G}_0[X,Y,\beta ]|$ does not exceed
the product of the standard deviations
$\sqrt{\mathbf{Var}[X]}$ and $\sqrt{\mathbf{Var}[\beta(Y)]}$,
which do not exceed $(A-a)/2$ and $(B-b)/2$, respectively,
under what we call the `Gr\"uss condition':
\begin{itemize}
\item
There are two finite intervals $[a,A]\subset \mathbf{R}$ and
$[b,B]\subset \mathbf{R}$ such that $X\in [a,A]$
and $\beta(Y)\in [b,B]$ almost surely.
\end{itemize}
Hence, under the Gr\"uss condition,
we have that $\mathbf{G}_0[X,Y,\beta ]$ does not exceed
the right-hand side of
bound (\ref{cov-5e}), and we thus have that
\begin{equation}
\big | \mathbf{Cov}[X,\beta(Y)]\big |
\le \big | \mathbf{Corr}[X,Y]\big | \, {(A-a)(B-b)\over 4}.
\label{cov-10iii}
\end{equation}
Since $|\mathbf{Corr}[X,Y]|$ does not exceed $1$,
bound (\ref{cov-10iii}) implies Gr\"uss's bound (\ref{cov-5e})
irrespectively of the dependence structure
between $X$ and $Y$. When these two random variables
are independent, then the right-hand side of
bound (\ref{cov-5e}) is zero. This demonstrates
the pivotal role of the dependence structure when 
sharpening Gr\"uss's bound.

Reflecting upon the notes above,
we next put forward a general formulation of the problem that we shall tackle
from various angles throughout this paper.

\begin{problem}\label{mainproblem}
We are interested in establishing bounds of the form
\begin{equation}
\big | \mathbf{Cov}[X,\beta(Y)]\big |
\le \mathbf{D}[X,Y]\, \mathbf{G}[X,Y,\beta ],
\label{cov-10iv}
\end{equation}
where
\begin{itemize}
\item
$\mathbf{D}[X,Y]$ is a `dependence coefficient', which
must be equal to $0$ when the random variables $X$ and $Y$ are independent, 
and should not depend on the distortion function $\beta $; 
\item
$\mathbf{G}[X,Y,\beta ]$ is a `Gr\"uss factor', which should not depend on
the dependence structure between $X$ and $Y$ but may depend on $\beta $ and
the cdf's  $F$ and $G$ of $X$ and $Y$, respectively.
\end{itemize}
\end{problem}

Throughout the paper we assume that the distortion function
$\beta: \mathbf{R} \to \mathbf{R}$ is
of bounded variation, meaning that it can be written as
the difference $\beta=\beta_1-\beta_2$ of two non-decreasing functions
$\beta_1, \beta_2: \mathbf{R} \to \mathbf{R}$.
The corresponding function $|\beta|: \mathbf{R} \to \mathbf{R}$
is defined by the equation $|\beta|(y)= \beta_1(y)+\beta_2(y)$.
When $\beta $ is differentiable, then $d|\beta|(y)=|\beta'(y)|dy$.
Furthermore, we use $\mathbf{1}\{S\}$
for the indicator function of statement $S$ which
is equal to $1$ when the statement $S$ is true and $0$ otherwise.
Hence, in particular, for any random variable $Z$ and any real number $z$,
\[
\tau_z(Z)=\mathbf{1}\{Z>z\}
\]
is a random variable that takes on the value $1$ when $Z>z$
and $0$ otherwise. We shall frequently view $\tau_z(Z)$
as a random function of $z$. In our following considerations,
we shall also use the sign-function, $\mathrm{sign}(x)$, 
which takes on three values:
$-1$ when $x<0$, $0$ when $x=0$, and $+1$ when $x>0$.

\section{QD and QDE random variables and copulas}
\label{preliminaries}

One of the most fundamental equations that we utilize in the present paper
is the Cuadras-Hoeffding representation
\begin{equation}
\mathbf{Cov}[\alpha(X),\beta(Y)]
= \iint \mathbf{Cov}\big [\tau_x(X),\tau_y(Y)\big ] d\alpha(x)\, d\beta(y)
\label{cov-general}
\end{equation}
of the covariance between the transformed random variables
$\alpha(X)$ and $\beta(Y)$. The representation has been established by
Cuadras \cite{Cuadras} assuming, naturally and necessarily, 
that the expectations of $\alpha(X)$, $\beta(Y)$, and $\alpha(X)\beta(Y)$
are well-defined and finite. Covariance representation (\ref{cov-general})
generalizes the classical Hoeffding's \cite{Hoeffding} representation
established in the case $\alpha(x)=x$ and $\beta(x)=x$ (see also Sen \cite{Sen}).
The importance of representation (\ref{cov-general}) in our context
is that it achieves a separation of the dependence structure
present in $(X,Y)$ from the distortion functions $\alpha $ and $\beta $.
Hence, in particular, the positive quadrant-dependence (definition follows) implies
that $\mathbf{Cov}[X,Y]\ge 0$, and the negative quadrant-dependence
implies that $\mathbf{Cov}[X,Y]\le 0$. These are, of course,
well-known facts (Lehmann \cite{Lehmann}).

\begin{definition}[\it Lehmann \cite{Lehmann}]\label{quad-dep}
Two random variables $X$ and $Y$ are
\textit{positively (resp. negatively) quadrant dependent}
if $\mathbf{Cov}\big [\tau_x(X),\tau_y(Y)\big ]\ge 0$
(resp. $\le 0$) for all $x,y\in \mathbf{R}$. We abbreviate
this as PQD (resp. NQD), and when it is not important to specify whether
the two random variables are PQD or NQD, then we simply say that
they are quadrant dependent (QD).
\end{definition}

As a special case of representation (\ref{cov-general}) we have
the following one:
\begin{equation}
\mathbf{Cov}[X,\beta(Y)]
= \iint \mathbf{Cov}\big [\tau_x(X),\tau_y(Y)\big ] dx\, d\beta(y).
\label{cov-general-spec-1}
\end{equation}
Note that the inner integral on the right-hand
side of equation (\ref{cov-general-spec-1}) is equal to
$\mathbf{Cov}\big [X,\tau_y(Y)\big ]$, and so
representation (\ref{cov-general-spec-1}) becomes
\begin{equation}
\mathbf{Cov}[X,\beta(Y)]
= \int \mathbf{Cov}\big [X,\tau_y(Y)\big ] d\beta(y).
\label{cov-general-spec-2}
\end{equation}
The integrand on the right-hand side of equation (\ref{cov-general-spec-2})
is related to the following definition.

\begin{definition}[\it Kowalczyk and Pleszczynska \cite{Kowalczyk-Pleszczynska}]
\label{quad-dep-e}
A random variable $X$ is
\textit{positively (resp. negatively) quadrant dependent in expectation}
on a random variable $Y$
if $\mathbf{Cov}\big [X,\tau_y(Y)\big ]\ge 0$
(resp. $\le 0$) for all $y\in \mathbf{R}$.  We abbreviate
this as $X$ is PQDE (resp. NQDE) on $Y$, and when
it is not important to specify whether
these two random variables are PQDE or NQDE, then we simply say that
$X$ is quadrant dependent in expectation (QDE) on $Y$.
\end{definition}

QDE is not a stronger notion than QD, which follows from the
already noted but not explicitly written equation:
\begin{equation}
\mathbf{Cov}\big [X,\tau_y(Y)\big ]
=
\int \mathbf{Cov}\big [\tau_x(X),\tau_y(Y)\big ]dx.
\label{cov-0-general-i}
\end{equation}
For discussions and
hints on potential applications of this notion of dependence,
we refer to Kowalczyk and Pleszczynska \cite{Kowalczyk-Pleszczynska},
Wright \cite{wright}, and references therein. 
One would actually expect that 
QDE is a weaker notion than QD, which means that there must be pairs $(X,Y)$
which are QDE (i.e., either NQDE or PQDE) but not QD (i.e, neither NQD nor PQD).
Our search of the literature has not, however, revealed examples that would
formally confirm this non-equivalence of QDE and QD. Hence,
we next present general results pointing in the direction of non-equivalence,
and we shall use them in Section \ref{copulas} as our guide when constructing
specific examples of bivariate distributions that are QDE but not QD.

The main tool that we are going to employ is the notion of copula, which
is a surface $(u,v)\mapsto C(u,v)$ defined on
the square $[0,1]\times [0,1]$ and such that
$\mathbf{P}[X\le x, Y\le y]$ is equal to $C(F(x),G(y))$,
where $F$ and $G$ are the cdf's of $X$ and $Y$, respectively.
Hence, in particular, we have the equation
\begin{equation}
\mathbf{Cov}\big [X,\tau_y(Y)\big ]
=\int \Big (C(F(x),G(y))- F(x)G(y)\Big ) dx.
\label{cov-iii}
\end{equation} 
When the random variables $X$ and $Y$ have uniform (marginal) distributions,
then we denote them by  $U$ and $V$, respectively. In turn, we have the following
reformulations of Definitions \ref{quad-dep} and \ref{quad-dep-e} in terms
of the copula $C$, which is connected to the bivariate distribution
of $(U,V)$ via the equation
\[
\mathbf{P}[U\le u, V\le v]=C(u,v).
\]
Namely, $U$ and $V$ are PQD (resp. NQD) if
$C(u,v)\ge uv$ (resp. $C(u,v)\le uv$) for all $u,v\in [0,1]$,
and $U$ is PQDE (resp. NQDE) on $V$
if $\mathcal{C}(v)\ge 0$ (resp. $\mathcal{C}(v)\le 0$)
for all $v\in [0,1]$, where
$\mathcal{C}(v)=\mathbf{Cov}\big [U,\tau_v(V)\big ]$,
that is (cf.\, equation (\ref{cov-iii})),
\[
\mathcal{C}(v)=\int_0^1 \big (C(u,v)- uv\big ) du .
\]
In general, we have the following QD and QDE definitions for copulas.

\begin{definition}\label{copula-quad-dep}
Copula $(u,v)\mapsto C(u,v)$ is PQD (resp. NQD) if
$C(u,v)\ge uv$ (resp. $C(u,v)\le uv$) for all $u,v\in [0,1]$.
The copula is QD if it is either NQD or PQD.
\end{definition}

\begin{definition}\label{copula-quad-dep-e}
Copula $(u,v)\mapsto C(u,v)$ is PQDE (resp. NQDE)
if $\mathcal{C}(v)\ge 0$ (resp. $\le 0$) for all $v\in [0,1]$.
The copula is QDE if it is either NQDE or PQDE.
\end{definition}

\begin{note}\rm 
In Definition \ref{copula-quad-dep-e} it would be more
precise to say that $U$ is PQDE (resp. NQDE) on $V$
if $\mathcal{C}(v)\ge 0$ (resp. $\le 0$) for all $v\in [0,1]$.
Analogously, $U$ is QDE on $V$ if $U$ is either NQDE or PQDE on $V$.
We avoid this pedantry by always considering the `first variable' to be
(N/P)QDE on the `second variable'.
\end{note}

Hence, the problem that we are interested in at the moment
is whether there are any copulas that are
QDE (i.e., either NQDE or PQDE) but not QD (i.e, neither NQD nor PQD).
The following two general theorems are fundamental in solving this problem,
with illustrative examples provided in Section \ref{copulas}.

\begin{theorem}
\label{general}
Let $C_{0}(u,v)$ and $C_{1}(u,v)$ be NQD and PQD copulas, respectively.
Denote their convex combination by
$C_\alpha(u,v)=(1-\alpha)C_0(u,v)+\alpha C_1(u,v)$ with parameter
$\alpha \in[0,1]$. Suppose that the surface
\begin{equation}
(u,v)\mapsto \frac{uv-C_0(u,v)}{C_1(u,v)-C_0(u,v)}
\label{funct-cond-1}
\end{equation}
is not constant on $[0,1]\times [0,1]$.
Then there exist $m,m',M',M\in [0,1]$ such that $0\leq m\leq m'$, $M'\leq M\leq 1$,
and $m<M$, and such that the copula $C_\alpha$ is: 
\begin{itemize}
\item
NQD for $\alpha\in [0,m]$;
\item
neither NQD nor PQD for $\alpha\in (m,M)$;
\item
PQD for $\alpha\in [M,1]$;
\item
NQDE if and only if $\alpha \in [0,m']$ (it could be that $m=m'$);
\item
neither NQDE nor PQDE for $\alpha\in (m',M')$ (it could be that $m'\geq M'$,
in which case the interval $(m',M')$ is empty);
\item
PQDE if and only if $\alpha \in [M',1]$ (it could be that $M=M'$).
\end{itemize}
\end{theorem}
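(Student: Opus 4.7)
The plan is to reduce everything to affine-in-$\alpha$ linearity, both pointwise (for QD) and after integrating in $u$ (for QDE). Writing
\begin{equation*}
C_\alpha(u,v) - uv = (1-\alpha)\bigl(C_0(u,v)-uv\bigr) + \alpha\bigl(C_1(u,v)-uv\bigr),
\end{equation*}
and using the sandwich $C_0(u,v)\le uv\le C_1(u,v)$, I would observe that at any point where $C_0(u,v)<C_1(u,v)$ the inequality $C_\alpha(u,v)\ge uv$ is equivalent to $\alpha \ge h(u,v)$, where $h(u,v)$ is precisely the ratio in hypothesis (\ref{funct-cond-1}). At exceptional points $C_0(u,v)=C_1(u,v)$ one automatically has $C_0(u,v)=uv=C_1(u,v)$, so both QD inequalities hold trivially for every $\alpha$ and such points can be discarded. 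Setting $m=\inf h$ and $M=\sup h$, the pointwise inequalities then give immediately that $C_\alpha$ is NQD iff $\alpha\in[0,m]$ and PQD iff $\alpha\in[M,1]$; the assumption that $h$ is not constant produces two points with distinct $h$-values, forcing $m<M$.

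For the QDE layer I integrate out $u$, using representation (\ref{cov-iii}) to obtain $\mathcal{C}_\alpha(v)=(1-\alpha)\mathcal{C}_0(v)+\alpha\mathcal{C}_1(v)$, with $\mathcal{C}_0(v)\le 0\le \mathcal{C}_1(v)$ for all $v$. Exactly the same algebraic manipulation, now pointwise in $v$ only, produces a one-variable auxiliary function
\begin{equation*}
H(v)=\frac{-\mathcal{C}_0(v)}{\mathcal{C}_1(v)-\mathcal{C}_0(v)}
\end{equation*}
with the analogous trivial-case convention, together with thresholds $m'=\inf H$ and $M'=\sup H$ that characterize NQDE and PQDE in the `iff' form required by the statement.

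To place the QDE thresholds inside the QD ones, I invoke the implication `QD implies QDE', which is read off directly from equation (\ref{cov-0-general-i}): integrating a non-negative (resp.\ non-positive) pointwise covariance in $x$ preserves the sign. Consequently the set $[0,m]$ of NQD parameters is contained in the set $[0,m']$ of NQDE parameters, giving $m\le m'$, and symmetrically $M'\le M$. Observe that no relation between $m'$ and $M'$ is forced by this argument, which is exactly what the theorem acknowledges when it allows the middle interval $(m',M')$ to be empty.

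The main obstacle is not any single analytic step but the bookkeeping at the degenerate locus where $C_1-C_0$ (or $\mathcal{C}_1-\mathcal{C}_0$) vanishes; handling it via the trivial-case observation above is the cleanest route and keeps the characterization of $m,m',M',M$ as genuine infima and suprema. The only genuinely non-routine ingredient is the recognition that hypothesis (\ref{funct-cond-1}) on the non-constancy of the ratio surface is precisely what is needed to guarantee the strict separation $m<M$.
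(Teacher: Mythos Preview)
Your proof is correct and takes a genuinely different route from the paper. You exploit the affine-in-$\alpha$ structure pointwise, defining $m=\inf h$ and $M=\sup h$ explicitly as the extreme values of the ratio surface $h(u,v)$ (and analogously $m'=\inf H$, $M'=\sup H$ from the integrated version), which yields the `iff' characterizations directly and even gives computable formulas for the thresholds. The paper instead argues more abstractly: it sets $I_{-}=\{\alpha:C_\alpha\text{ is NQD}\}$ and $I_{+}=\{\alpha:C_\alpha\text{ is PQD}\}$ and shows that (i) each contains an endpoint, (ii) each is closed under limits, (iii) each is connected (a convex combination of two NQD copulas is NQD), and (iv) they are disjoint, the last being where the non-constancy hypothesis enters via a short contradiction. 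From (i)--(iv) the sets must be $[0,m]$ and $[M,1]$ with $m<M$; the QDE layer is then handled in the same way using $\mathcal{C}_\alpha$. Your approach is more elementary and makes transparent what the thresholds actually are; the paper's approach sidesteps all bookkeeping at the degenerate locus $C_1=C_0$ (respectively $\mathcal{C}_1=\mathcal{C}_0$), since closedness and connectedness of $I_\pm$ are insensitive to where the denominator vanishes.
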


\begin{proof} Let
$I_{-}=\{\alpha\in[0,1]\,:\, C_\alpha\hbox{ is NQD}\}$
and
$I_{+}=\{\alpha\in[0,1]\,:\,  C_\alpha\hbox{ is PQD}\}$.
We have the following facts:
\begin{enumerate}
\item
$0\in I_-$ and $1\in I_+$.
\item $I_-$ (similarly $I_+$) is a closed subspace of $[0,1]$.
Namely, if
$C_{\alpha_k}(u,v)-uv\leq 0$ for all $ u,v\in [0,1]$
and $\alpha_k\to \alpha$, then
$C_{\alpha}(u,v)-uv\leq 0$ for all $ u,v\in [0,1]$.
\item
$I_-$ (similarly $I_+$) is a connected space. Namely, if $\alpha,\beta \in I_-$,
then $C_\gamma$ for any $\gamma\in [\alpha,\beta]$ is a convex
combination of $C_\alpha $ and $C_\beta$, and so it is NQD.
\item
$I_-$ and $I_+$ are closed intervals (it follows from 2 and 3).
\item
$I_-\cap I_+=\emptyset$. We prove this by contradiction.
Suppose that there exists $\alpha\in I_-\cap I_+$. Then $C_\alpha$ is NQD and PQD.
This implies that
$(1-\alpha)C_0(u,v)+\alpha C_1(u,v)-uv=0$ for all $ u,v\in [0,1]$.
Hence, function (\ref{funct-cond-1}) is equal to
the constant $\alpha $; a contradiction.
\end{enumerate}
In view of the above facts we have that $I_-=[0,m]$ and $I_+=[M,1]$ with $m<M$,
and the first three statements of Theorem \ref{general} follow. In a similar way,
but working with the function
\begin{equation}\label{function-ccc}
\mathcal{C}_{\alpha}(v)=\int_0^1 \big (C_{\alpha}(u,v)- uv\big ) du,
\end{equation}
we establish the other three statements of Theorem \ref{general}.
Note that NQD (PQD) implies
NQDE (PQDE), and so we must have $m\leq m'$ and $M'\leq M$.
This completes the proof of Theorem \ref{general}.
\end{proof}

\begin{note}\rm
If we have $m<m'$, then there are $\alpha$ values such that
the copula $C_\alpha$
is neither NQD nor PQD, but it is NQDE. Similarly, if we have $M'<M$, then
there are $\alpha$ values such that the copula $C_\alpha$
is neither NQD nor PQD, but it is PQDE.
\end{note}

\begin{theorem}\label{general-2}
Let the assumptions of Theorem \ref{general} be satisfied,
and let $\mathcal{C}_{\alpha}(v)$ be given by equation (\ref{function-ccc}).
Furthermore, assume that there is a constant $\kappa \in [0,1]$ such that
\[
\frac{\mathcal{C}_0(v)}{\mathcal{C}_0(v)-\mathcal{C}_1(v)}=\kappa
\]
for all $v\in (0,1)$. Then there is an open interval of $\alpha$ values
such that the copula $C_\alpha$ is neither NQD nor PQD,
but it is either NQDE or PQDE.
\end{theorem}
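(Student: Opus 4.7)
The plan is to exploit the hypothesis to show that the NQDE/PQDE threshold for the convex combination $C_\alpha$ is the single value $\alpha=\kappa$, and then combine this with Theorem \ref{general}, which already guarantees a non-empty open interval $(m,M)$ on which $C_\alpha$ is neither NQD nor PQD. First I would use linearity of the integral defining $\mathcal{C}_\alpha$ to write $\mathcal{C}_\alpha(v)=(1-\alpha)\mathcal{C}_0(v)+\alpha\,\mathcal{C}_1(v)$. Rearranging the hypothesis yields the identity $\mathcal{C}_0(v)=\kappa\,(\mathcal{C}_0(v)-\mathcal{C}_1(v))$ for every $v\in(0,1)$, and substituting this into the previous display gives the compact representation $\mathcal{C}_\alpha(v)=(\kappa-\alpha)\,(\mathcal{C}_0(v)-\mathcal{C}_1(v))$.

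Next, since $C_0$ is NQD and $C_1$ is PQD, integrating the defining inequalities over $u\in[0,1]$ shows that $\mathcal{C}_0(v)\le 0$ and $\mathcal{C}_1(v)\ge 0$, so the factor $\mathcal{C}_0(v)-\mathcal{C}_1(v)$ is non-positive on $[0,1]$. Consequently $\mathcal{C}_\alpha(v)\le 0$ for every $\alpha\in[0,\kappa]$ and $\mathcal{C}_\alpha(v)\ge 0$ for every $\alpha\in[\kappa,1]$. In the notation of Theorem \ref{general}, this pins down $m'=M'=\kappa$, so every $\alpha\in[0,1]$ produces a QDE copula $C_\alpha$, and the NQDE/PQDE transition happens at the single point $\kappa$.

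Finally, I would invoke Theorem \ref{general} to produce $0\le m<M\le 1$ such that $C_\alpha$ is neither NQD nor PQD throughout the non-empty open interval $(m,M)$. A short case analysis on the location of $\kappa$ then concludes the argument: if $\kappa\le m$, then all of $(m,M)$ consists of PQDE copulas; if $\kappa\ge M$, all of $(m,M)$ consists of NQDE copulas; and if $m<\kappa<M$, then both $(m,\kappa)$ and $(\kappa,M)$ are non-empty open subintervals, on which $C_\alpha$ is NQDE and PQDE, respectively. The main obstacle is purely one of bookkeeping: one must simultaneously track the NQD/PQD status (handed over by Theorem \ref{general}) and the NQDE/PQDE status (extracted from the new hypothesis), and then verify that their intersection contains an open set. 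Because $m<M$ strictly, this intersection is guaranteed to contain an open interval no matter where $\kappa$ sits in $[0,1]$, so no genuine analytic difficulty arises.
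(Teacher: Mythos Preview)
Your proposal is correct and follows essentially the same route as the paper. The only difference is one of packaging: you derive the explicit factorization $\mathcal{C}_\alpha(v)=(\kappa-\alpha)\big(\mathcal{C}_0(v)-\mathcal{C}_1(v)\big)$ and thereby pin down $m'=M'=\kappa$ exactly, whereas the paper simply observes $\mathcal{C}_\kappa\equiv 0$, deduces $M'\le\kappa\le m'$, and then argues from $m\le m'$, $M'\le M$, $m<M$ that either $m<m'$ or $M'<M$ must hold. Your sharper identification makes the final case split (on the position of $\kappa$ relative to $m$ and $M$) marginally cleaner, but the two arguments are the same in substance.
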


\begin{proof} We have that
$\mathcal{C}_{\kappa} (v)=0$ for all $v\in [0,1]$.
Thus, the copula $C_{\kappa }$ is both PQDE and NQDE.
By the previous theorem, we have that
$M'\leq \kappa \leq m'$. Since $m\leq m'$, $M'\leq M$ and $m<M$,
we deduce that $m<m'$ or $M'<M$. Consider these two cases separately:
1) if $m<m'$, then for any $\alpha\in (m,m']$ the copula $C_\alpha$ is
neither NQD nor PQD, but it is NQDE, and
2) if $M'<M$, then for any $\alpha\in [M',M)$ the copula $C_\alpha$ is
neither NQD nor PQD, but it is PQDE.
This completes the proof of Theorem \ref{general-2}.
\end{proof}

\section{Examples of QDE copulas which are not QD}
\label{copulas}

Here we give three examples of QDE copulas that are not QD.
In the first two examples we
choose NQD and PQD copulas such that their convex combinations
are not QD but, nevertheless, are QDE. The third example is based
on a copula which is not QD but, under an appropriate choice
of marginal distributions, produces a bivariate distribution
that is not QD but, nevertheless, is QDE. These three
examples open up broad avenues for constructing
QDE copulas that are not QD, using a myriad of existing copulas
whose QD-type properties have been documented
in the literature (e.g., Nelsen \cite{nelsen}).
For discussions concerning copulas in the context of
actuarial, financial, and other applications,
we refer to, for example, Denuit et al. \cite{Denuit},
Genest and Favre \cite{gf-2007},
Genest et al. \cite{ggb-2009},
McNeil et al. \cite{McNeil-Frey-Embrechts},
and references therein.

\begin{example}\label{example-19}\rm
The Fr\'echet lower-bound (FL) copula
\[
C_{FL}(u,v)=\max\{0,u+v-1\}
\]
is NQD, and the Fr\'echet upper-bound (FU) copula
\[
C_{FU}(u,v)=\min\{u,v\}
\]
is PQD. Both are defined on the unit square $[0,1]\times [0,1]$.
Let $C_{\alpha}$ be the convex combination of the two Fr\'echet copulas
(cf.\ McNeil et al. \cite{McNeil-Frey-Embrechts}):
\begin{equation}\label{conv-1}
C_{\alpha}(u,v)=(1-\alpha) C_{FL}(u,v) + \alpha C_{FU}(u,v),
\end{equation}
where $\alpha \in (0,1)$.
We see from Figure \ref{figure-00a}
\begin{figure}[h!]
\bigskip
\begin{center}
\includegraphics[width=5cm]{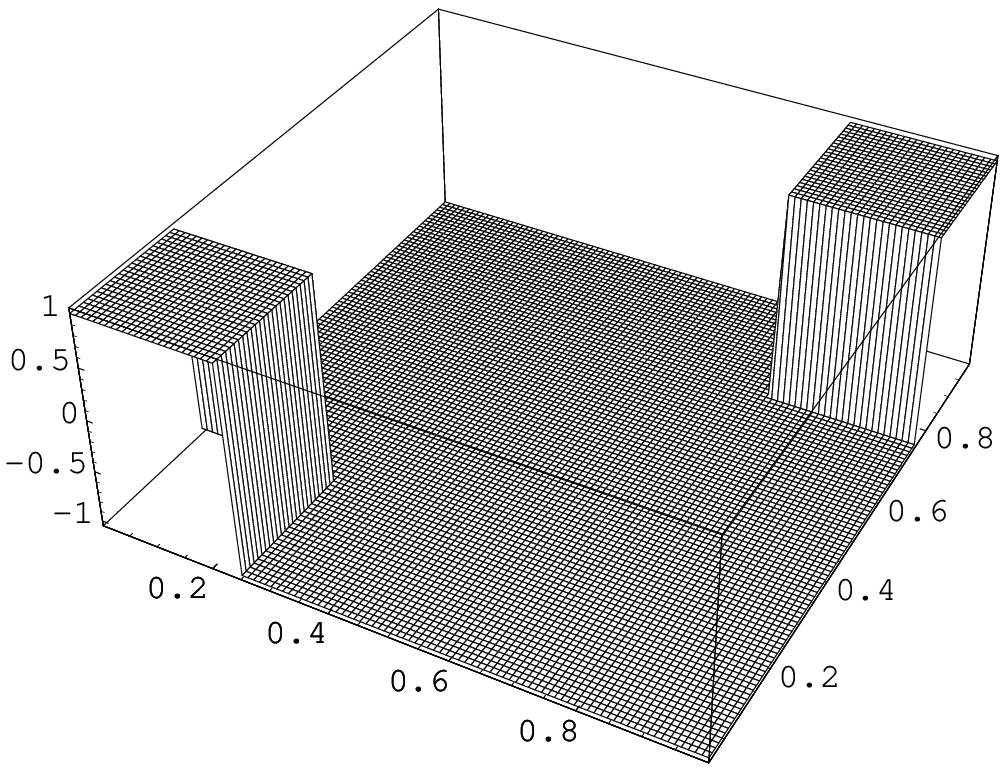}
\includegraphics[width=5cm]{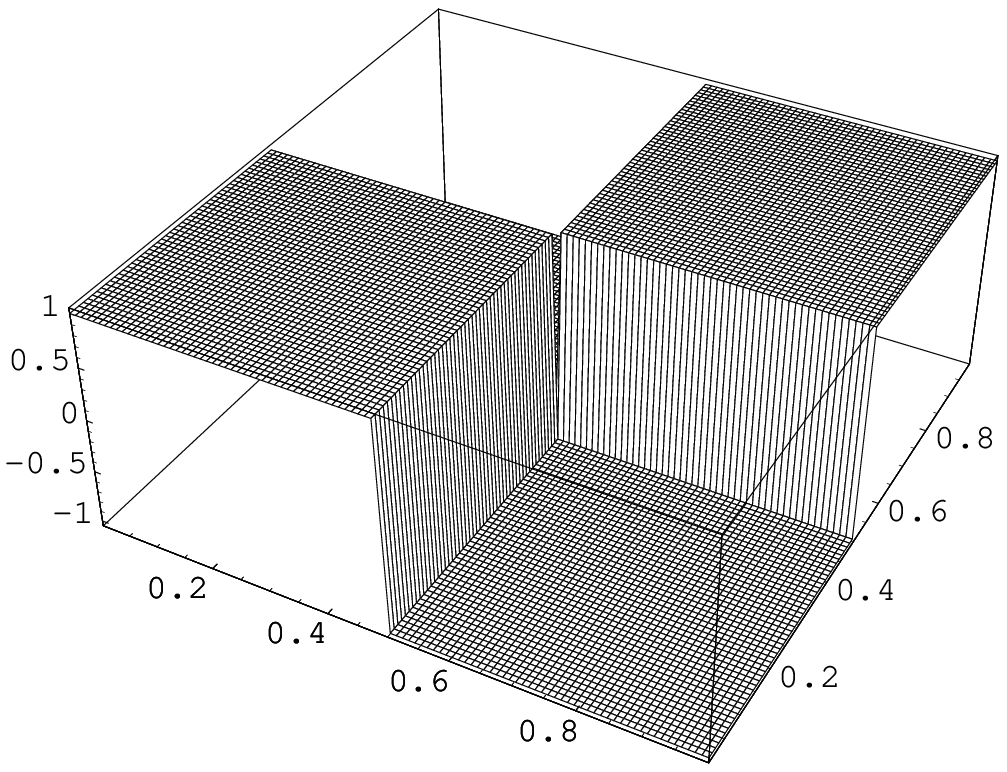}
\includegraphics[width=5cm]{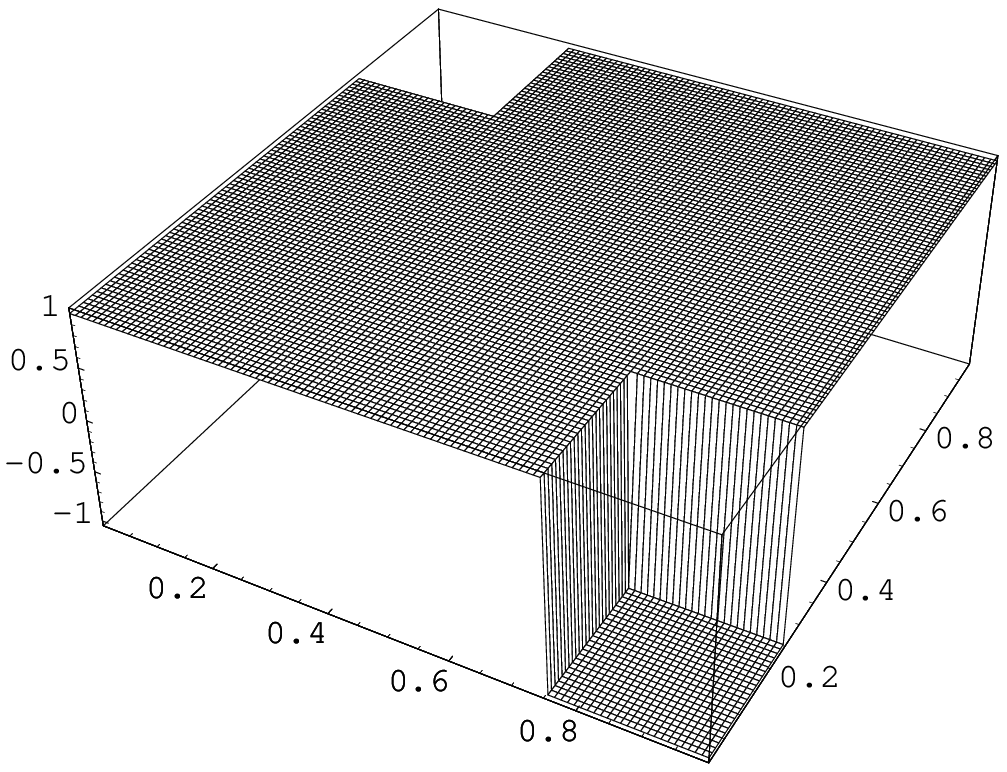} \\ \bigskip
\includegraphics[width=5cm]{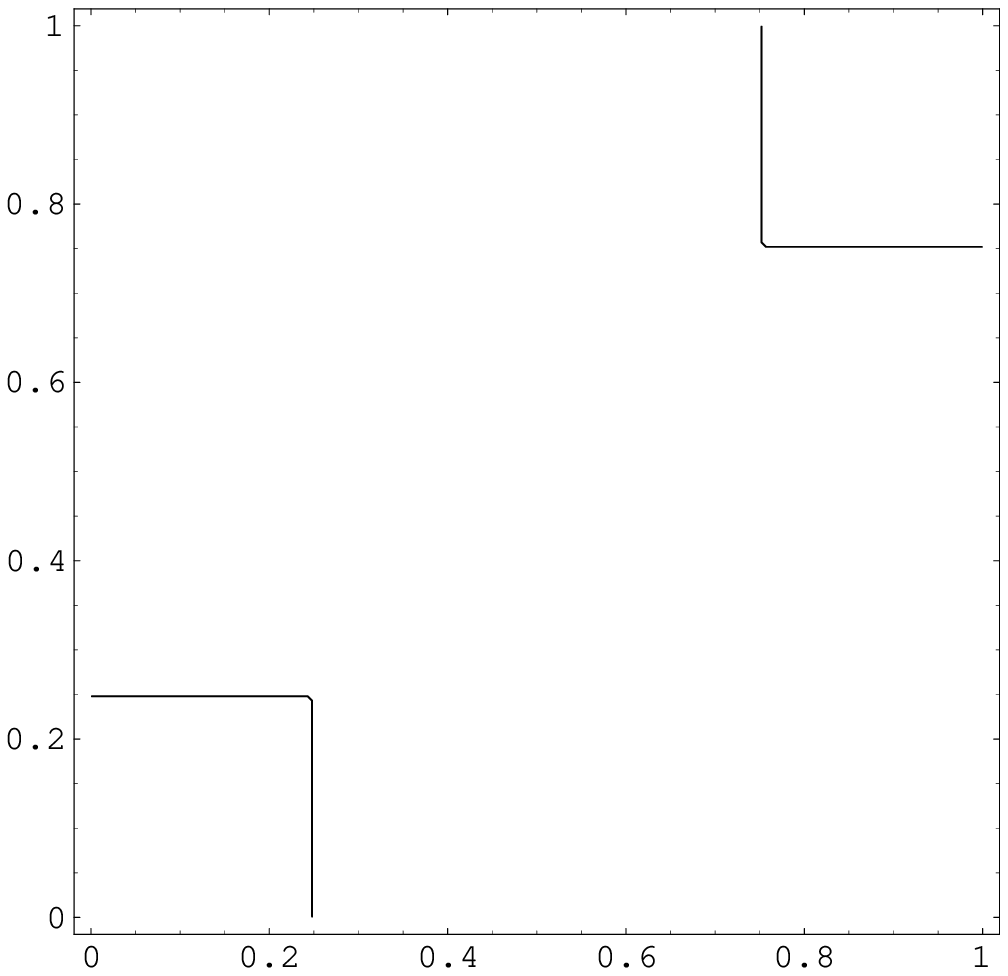}
\includegraphics[width=5cm]{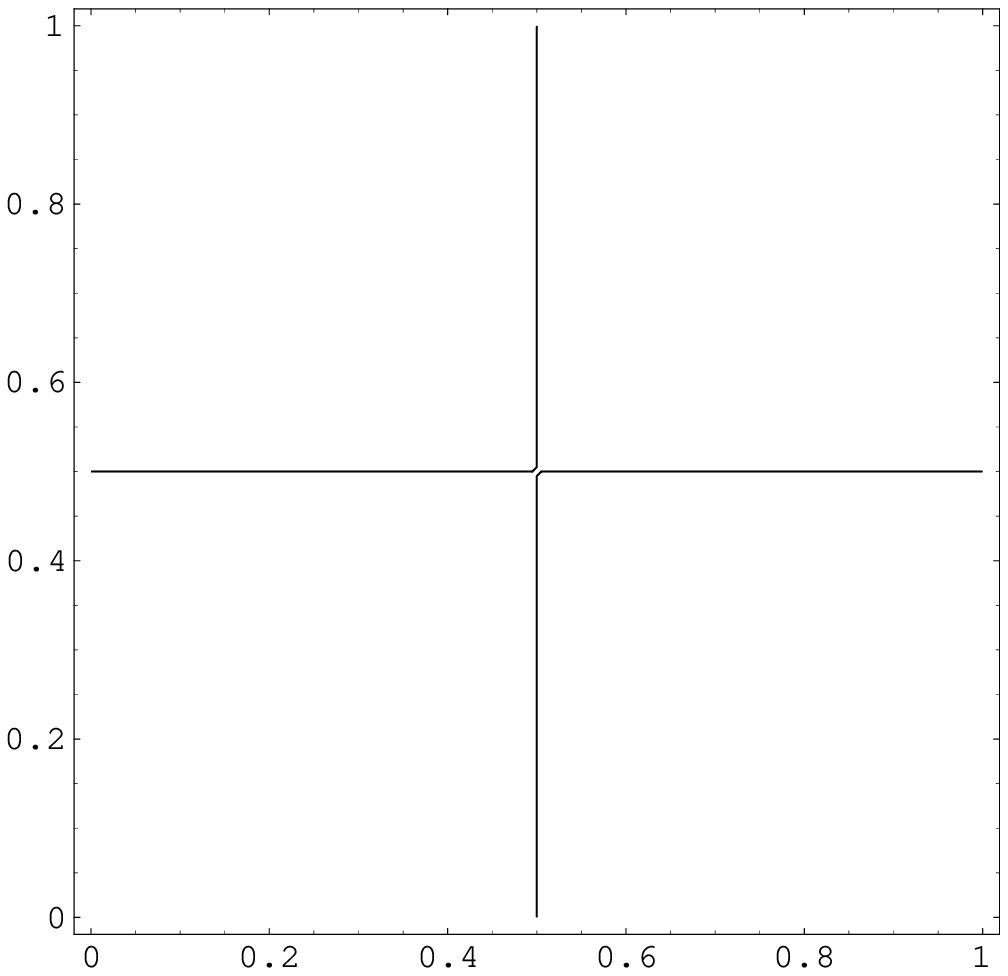}
\includegraphics[width=5cm]{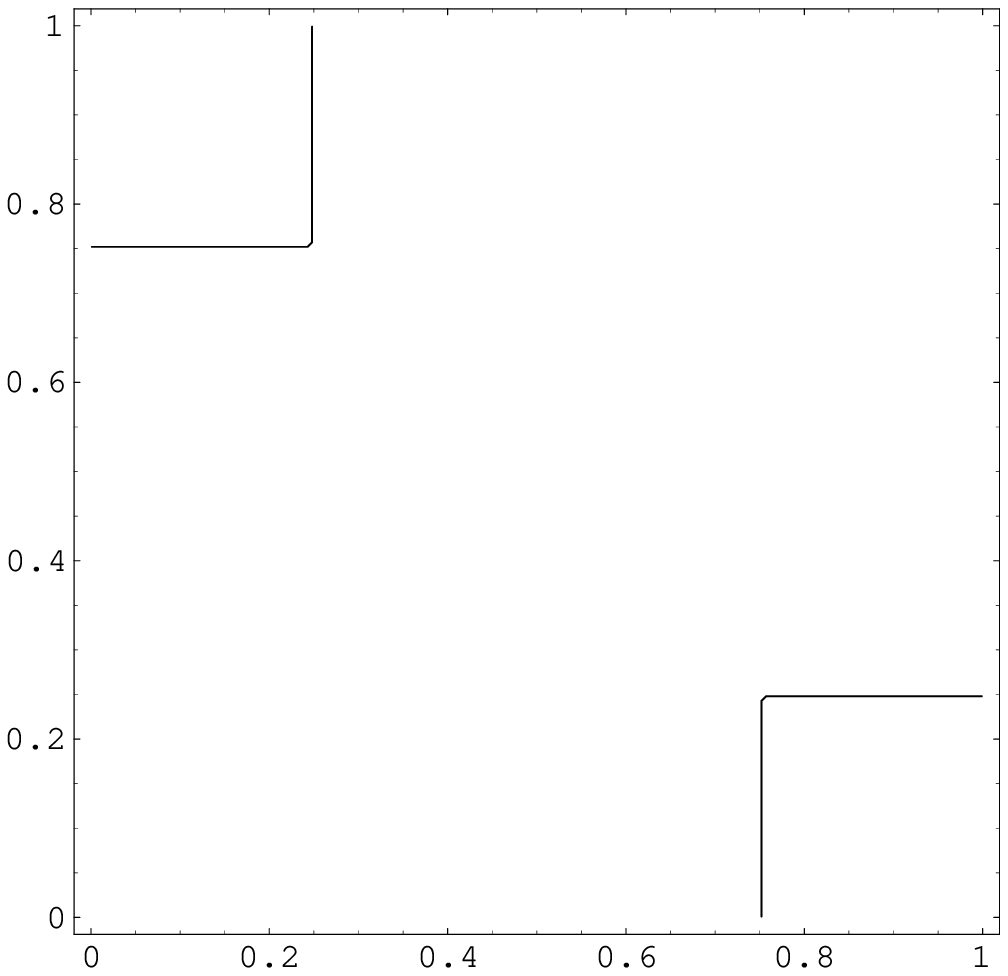}
\end{center}
\caption{The surface $(u,v)\mapsto \mathrm{sign}[C_{\alpha}(u,v)-uv]$ when
$\alpha =1/4$ (left top panel),
$\alpha =1/2$ (center top panel),
and $\alpha =3/4$ (right top panel) with
the corresponding contour plots beneath them.}
\label{figure-00a}
\end{figure}
that the copula $C_{\alpha}$ is neither PQD nor NQD.
To check whether $C_{\alpha}$
is QDE (i.e., either PQDE or NQDE), we calculate the integral
\begin{equation}
\mathcal{C}_{\alpha}(v)
=\int_0^1 \big (C_{\alpha}(u,v)- uv\big ) du
= v(1-v)\bigg ( \alpha - {1\over 2} \bigg ).
\label{c-1}
\end{equation}
Hence, $\mathcal{C}_{\alpha}(v)\le 0$ for all $v\in [0,1]$
if and only if $\alpha \le 1/2$, meaning that the copula $C_{\alpha}$ is NQDE.
Likewise, $\mathcal{C}_{\alpha}(v)\ge 0$ for all $v\in [0,1]$
if and only if $\alpha \ge 1/2$, meaning that $C_{\alpha}$ is PQDE.
Hence, for example, when $\alpha =1/4$, then $C_{\alpha}$
is neither PQD nor NQD but it is NQDE. Likewise, when
$\alpha =3/4$, then $C_{\alpha}$ is neither PQD nor NQD but it is PQDE.
This concludes Example \ref{example-19}.
\end{example}

\begin{example}\label{example-20}\rm
Here we first choose the Farlie-Gumbel-Morgenstern (FGM) copula
\[
C_{FGM}(u,v)=uv\big (1+\theta (1-u)(1-v)\big )
\]
with $\theta \in [-1,1]$;
we set the parameter $\theta $ to $-1$ throughout this example
to make the FGM copula NQD. Next we choose the already noted
Fr\'echet upper-bound copula $C_{FU}(u,v)=\min\{u,v\}$,
which is PQD. Let $\alpha \in (0,1)$ be a parameter, and
let $C_{\alpha}$ be the convex combination of the above two copulas:
\begin{equation}\label{conv-2}
C_{\alpha}(u,v)=(1-\alpha ) C_{FGM}(u,v) + \alpha C_{FU}(u,v).
\end{equation}
We have that
\begin{align}
C_{\alpha}(u,v)-uv
&= \alpha \big (\min\{u,p\}-uv \big )-(1-\alpha ) uv(1-u)(1-v)
\notag
\\
&=
\left\{
  \begin{array}{ll}
    u(1-v)\big ( 1- (1-\alpha ) (1+v(1-u)) \big ) & \hbox{when }\, u\le v, \\
    v(1-u)\big ( 1- (1-\alpha ) (1+u(1-v)) \big ) & \hbox{when }\, u\ge v.
  \end{array}
\right.
\label{curve-UL}
\end{align}
Hence, $C_{\alpha}(u,v)-uv \ge 0$ for only those
$(u,v)\in [0,1]\times [0,1]$ that are between
(cf.\, equation (\ref{curve-UL})) the zero-curve
\[
U_{\alpha}=\big \{ (u,v):\, (1-\alpha ) (1+v(1-u))=1 \big \}
\quad \bigg [ \textrm{ that is, } v=v_U(u)={1\over 1-u} \, {\alpha \over 1-\alpha }  \bigg ]
\]
from above, and the zero-curve
\[
L_{\alpha}=\big \{ (u,v):\, (1-\alpha ) (1+u(1-v))=1 \big \}
\quad \bigg [ \textrm{ that is, } v=v_L(u)=1-{1\over u} \, {\alpha\over 1-\alpha }  \bigg ]
\]
from below.
We illustrate the two curves in Figure \ref{figure-01}.
\begin{figure}[h!]
\bigskip
\begin{center}
\includegraphics[width=5cm]{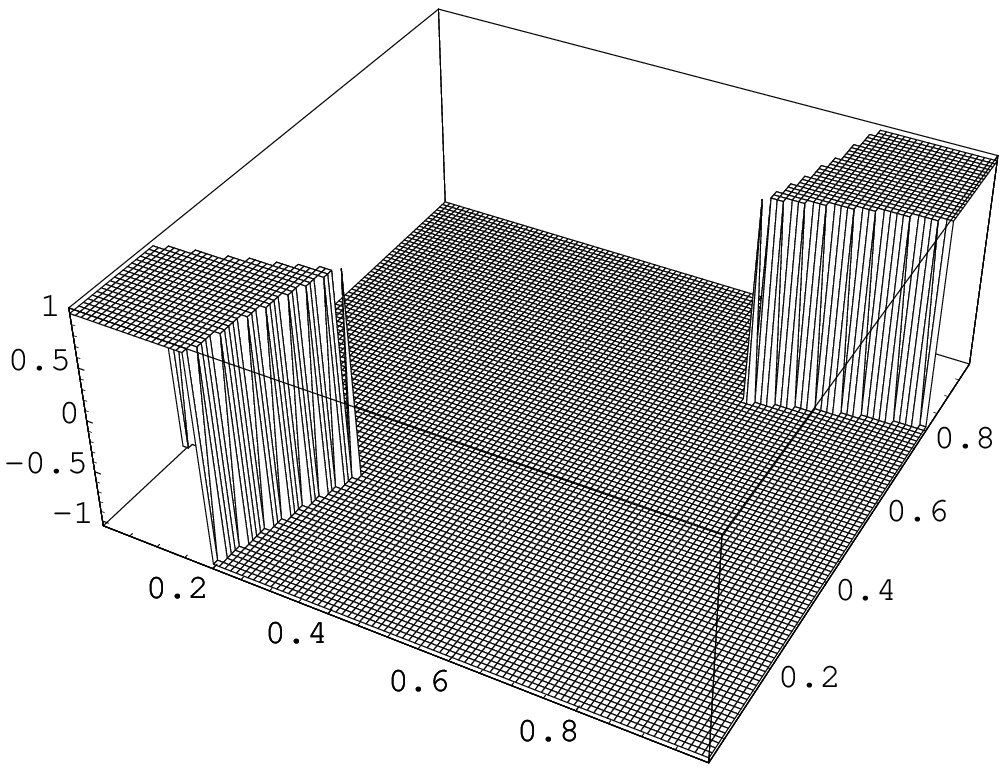}
\includegraphics[width=5cm]{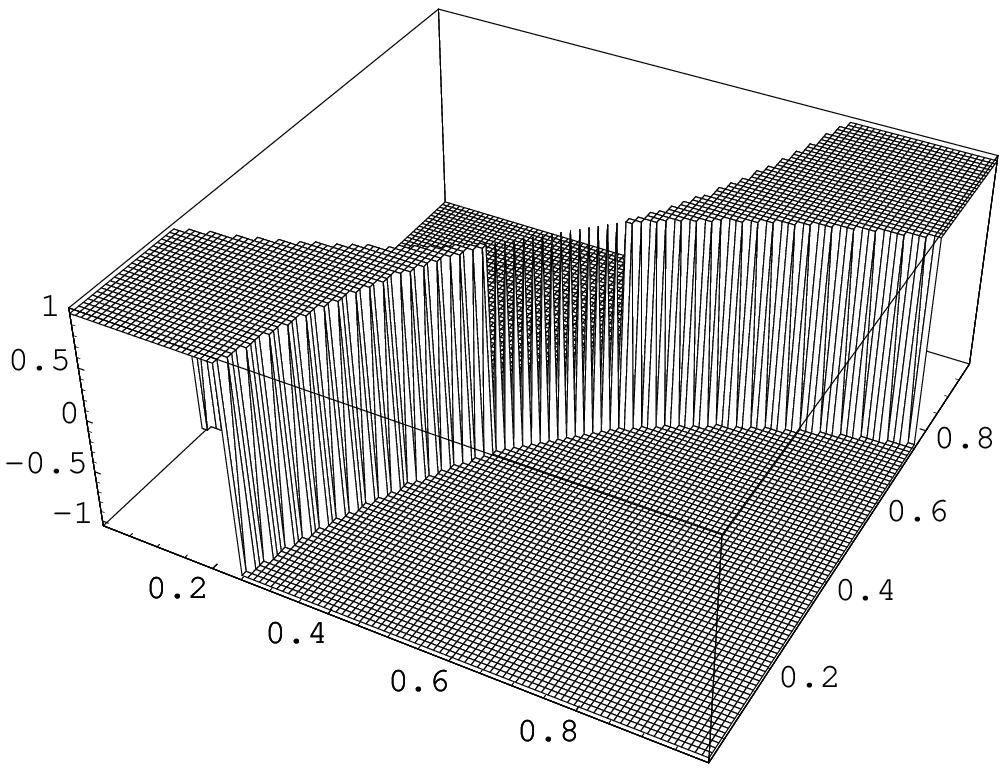}
\includegraphics[width=5cm]{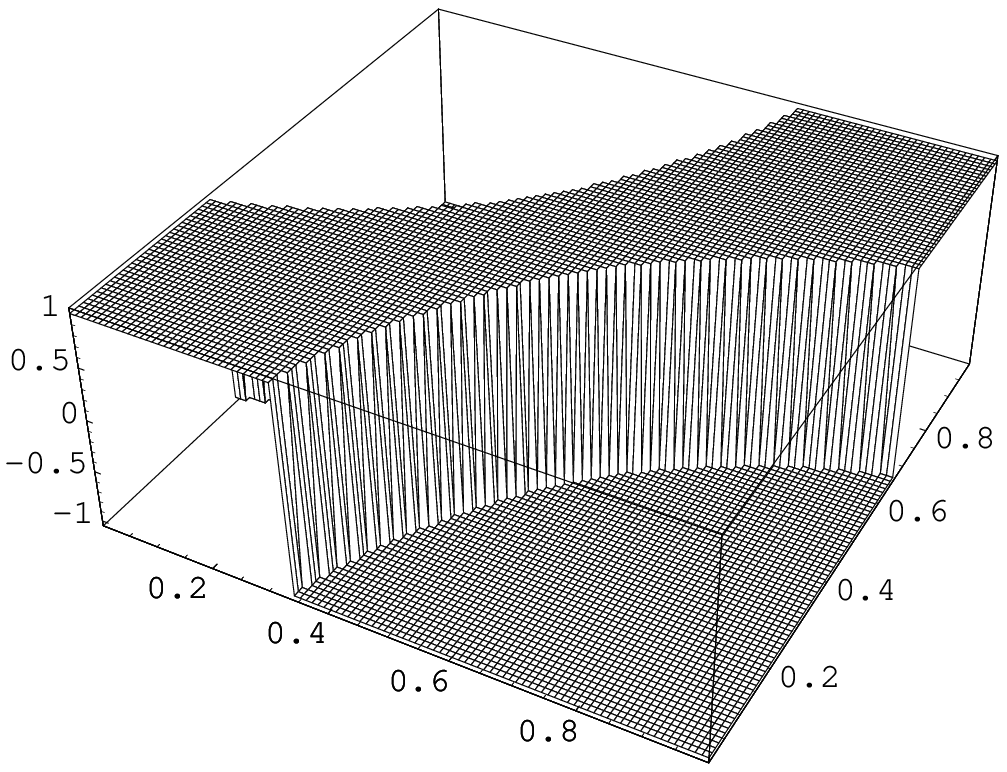} \\ \bigskip
\includegraphics[width=5cm]{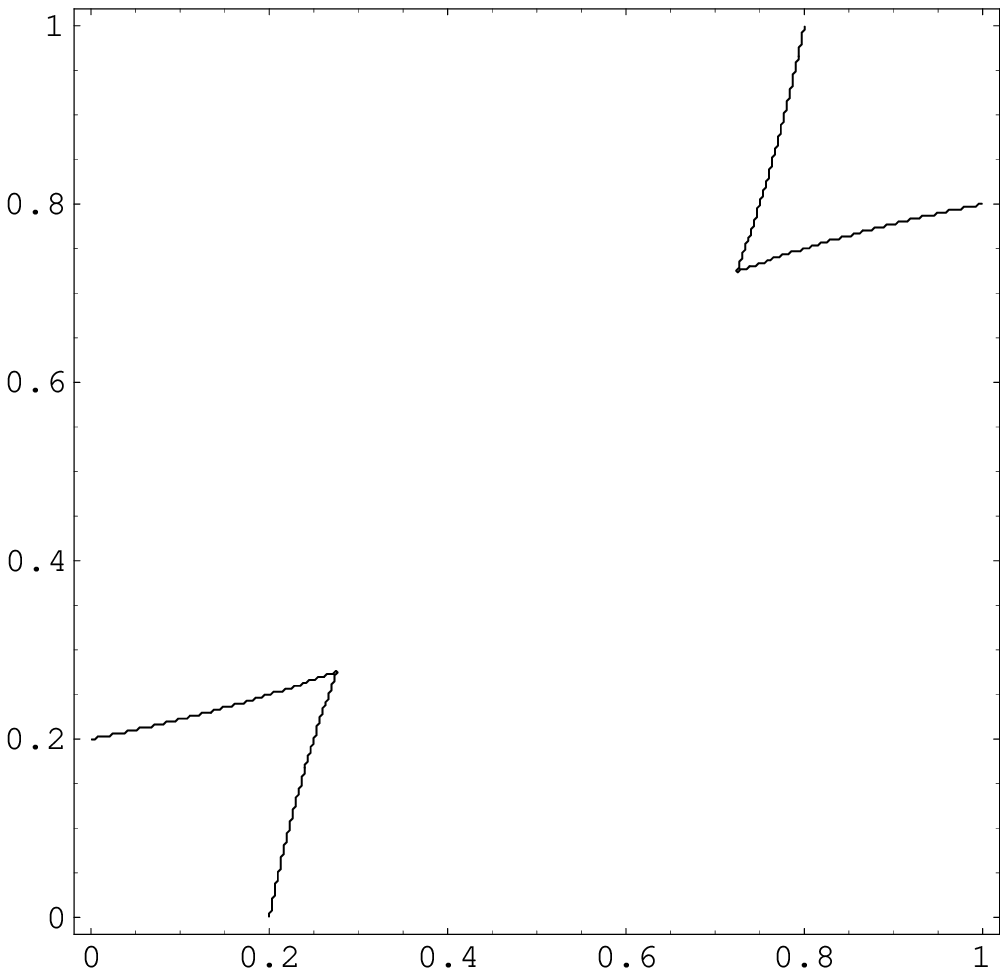}
\includegraphics[width=5cm]{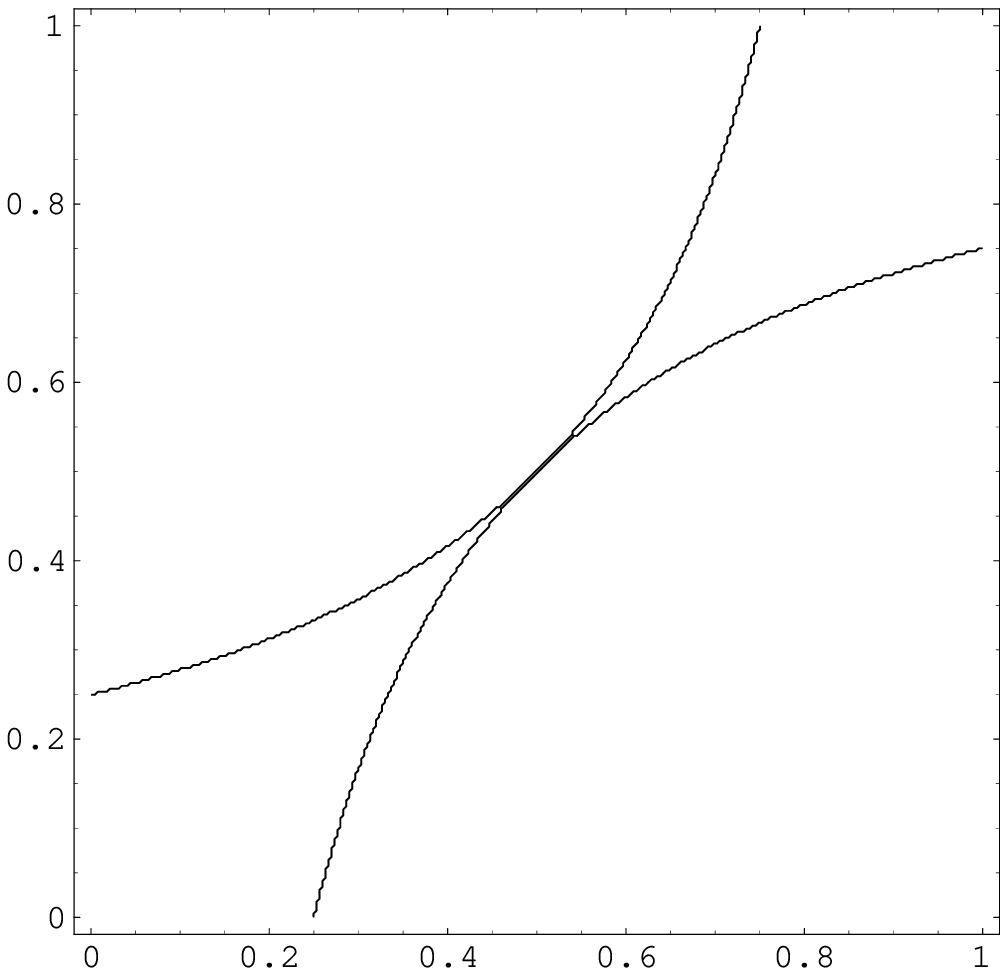}
\includegraphics[width=5cm]{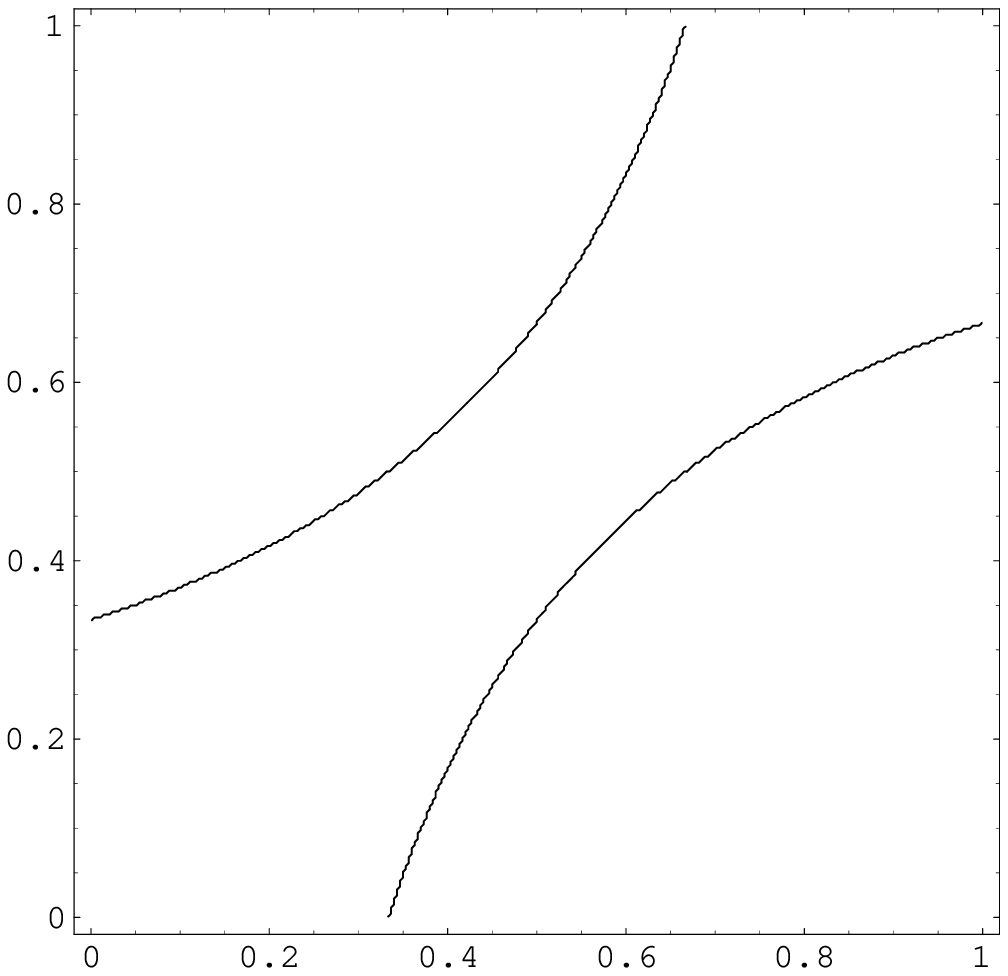}
\end{center}
\caption{The surface $(u,v)\mapsto \mathrm{sign}[C_{\alpha}(u,v)-uv]$ when
$\alpha =1/6$ (left top panel),
$\alpha =1/5$ (center top panel),
and $\alpha =1/4$ (right top panel) with
the zero-curves $U_{\alpha}$ and $L_{\alpha}$
in the corresponding plots beneath them.}
\label{figure-01}
\end{figure}
Note that the curves intersect in the interior of
the square $[0,1]\times [0,1]$ only when $\alpha \in (0, 1/5)$ and
touch each other at one point when $\alpha =1/5$.
As to the PQDE or NQDE, we calculate the integral
\begin{equation}
\mathcal{C}_{\alpha}(v)
=\int_0^1 \big (C_{\alpha}(u,v)- uv\big ) du
= {v(1-v)\over 2}\bigg ( {4\over 3}\, \alpha -{1\over 3}\bigg ).
\label{c-2}
\end{equation}
Hence, $\mathcal{C}_{\alpha}(v)\le 0$ for all $v\in (0,1)$
meaning that $C_{\alpha}$ is NQDE if and only if $\alpha \le 1/4$, and
$\mathcal{C}_{\alpha}(v)\ge 0$ for all $v\in (0,1)$
meaning that $C_{\alpha}$ is PQDE if and only if $\alpha \ge 1/4$.
This concludes Example \ref{example-20}.
\end{example}

\begin{example}\label{example-21}\rm
We model the pair $(X,Y)$ using the following
Archimedean copula (Genest and MacKay \cite{Genest--MacKay-CJS},
\cite{Genest--MacKay-AS};  Genest and Ghoudi \cite{Genest-Ghoudi};
see also Nelsen \cite{nelsen} for additional information and references)
\[
C_{\alpha}(u,v)=\max \left \{0, 1-\left (
(1-u^{\alpha })^{1/\alpha }+ (1-v^{\alpha })^{1/\alpha }
\right )^{\alpha }\right \}^{1/\alpha },
\]
where $\alpha \in (0,1)$ is parameter.
The copula $C_{\alpha}(u,v)$ is not QD.
The zero-curve
$Z_{\alpha}(u,v)=C_{\alpha}(u,v)-uv=0$, which separates
the NQD region from the PQD region, is given by
\[
Z_{\alpha}=\big \{ (u,v):\,
(1-u^{\alpha })^{1/\alpha }+ (1-v^{\alpha })^{1/\alpha }
=(1-u^{\alpha }v^{\alpha } )^{1/\alpha } \big \},
\]
depicted in Figure \ref{figure-1}.
\begin{figure}[h!]
\bigskip
\begin{center}
\includegraphics[width=5cm]{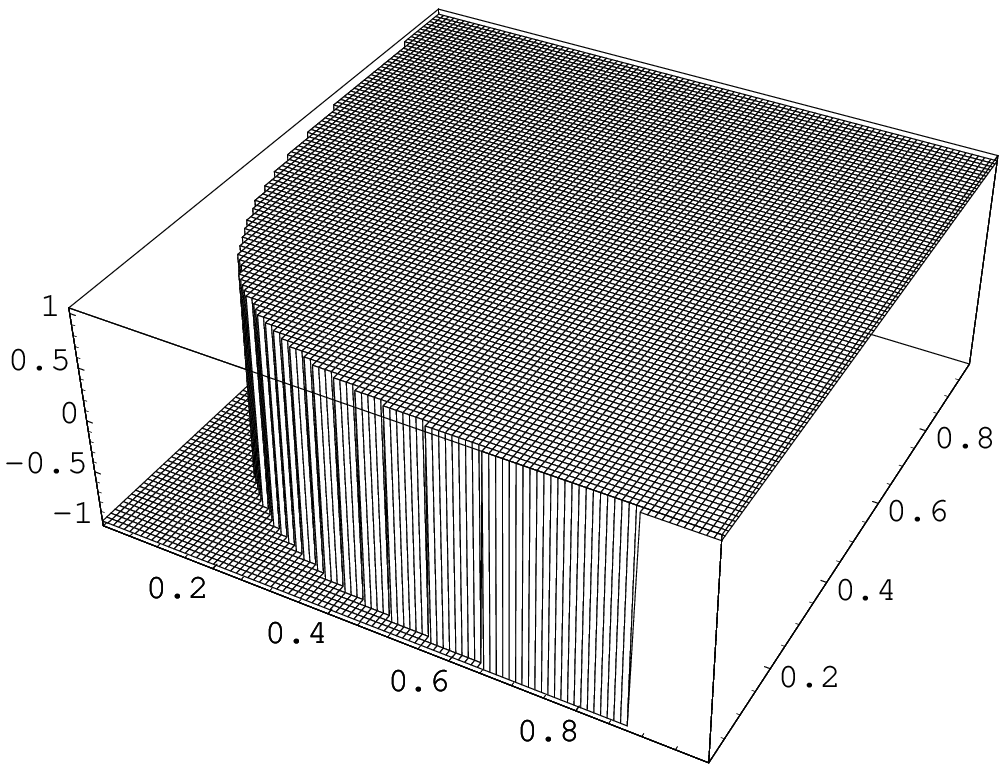}
\includegraphics[width=5cm]{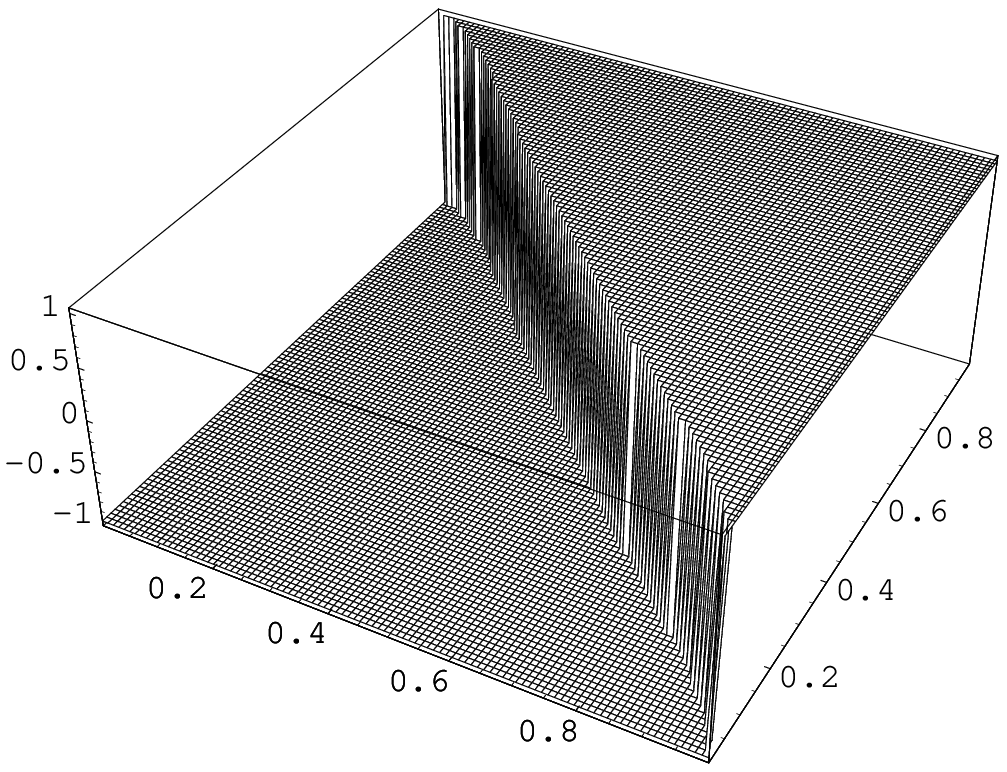}
\includegraphics[width=5cm]{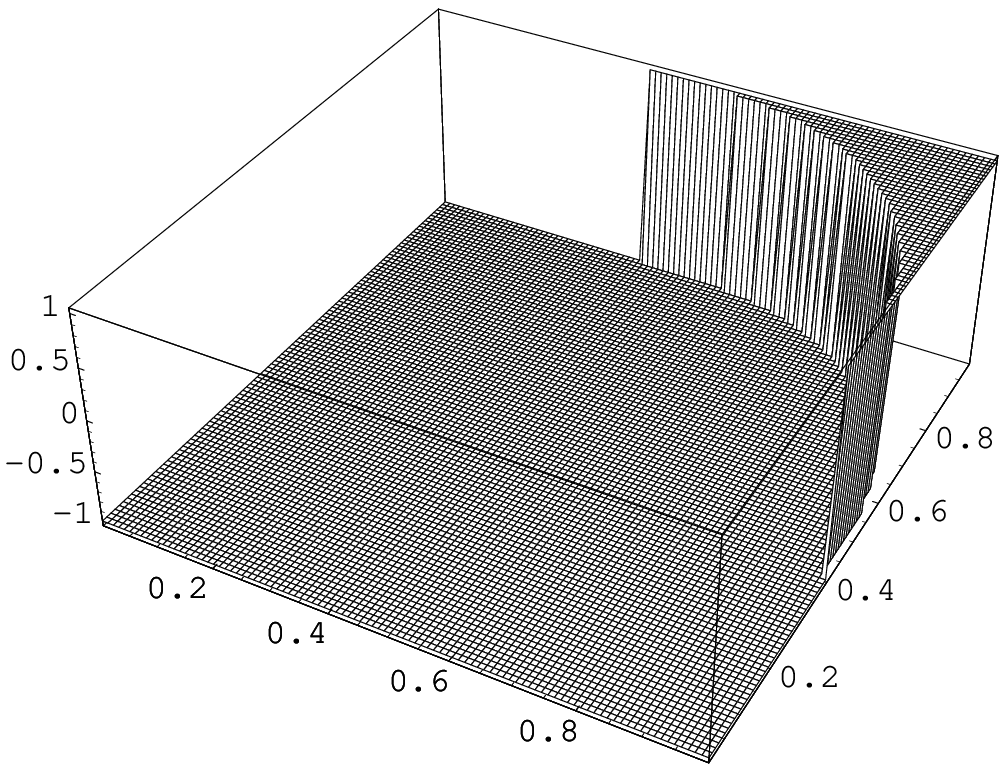} \\ \bigskip
\includegraphics[width=5cm]{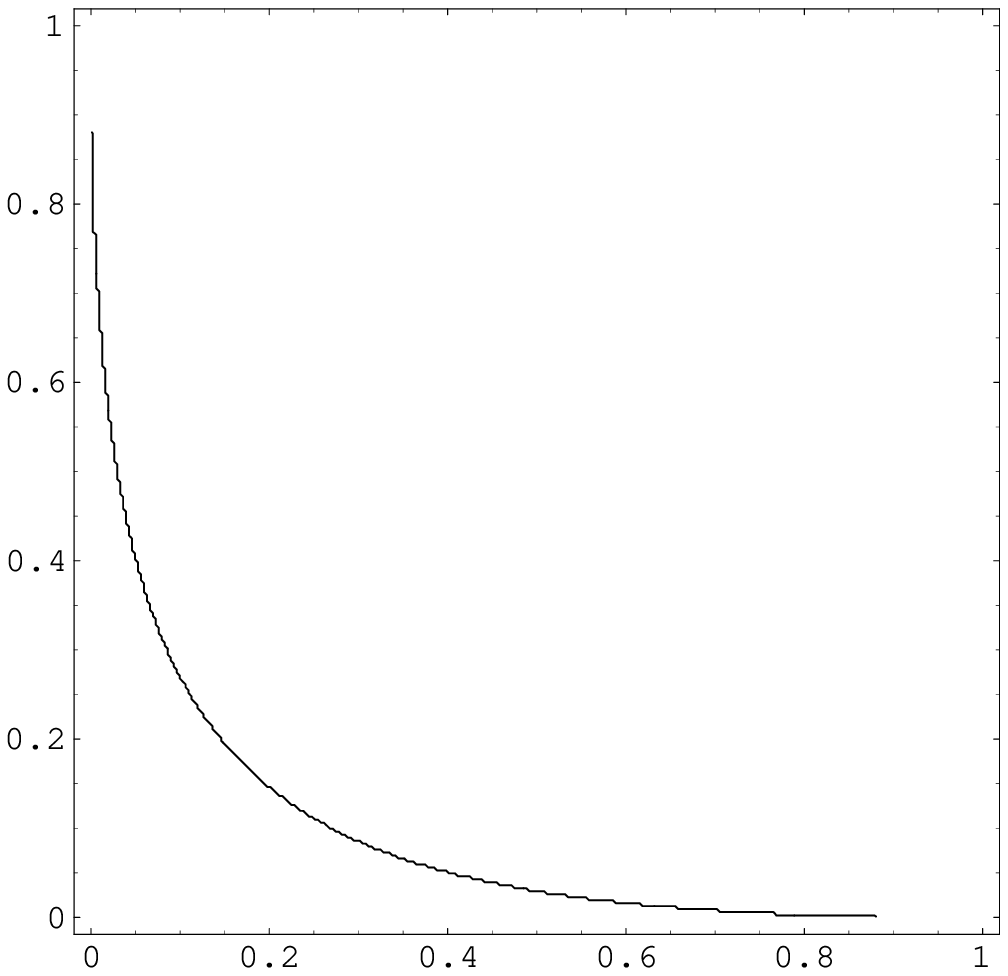}
\includegraphics[width=5cm]{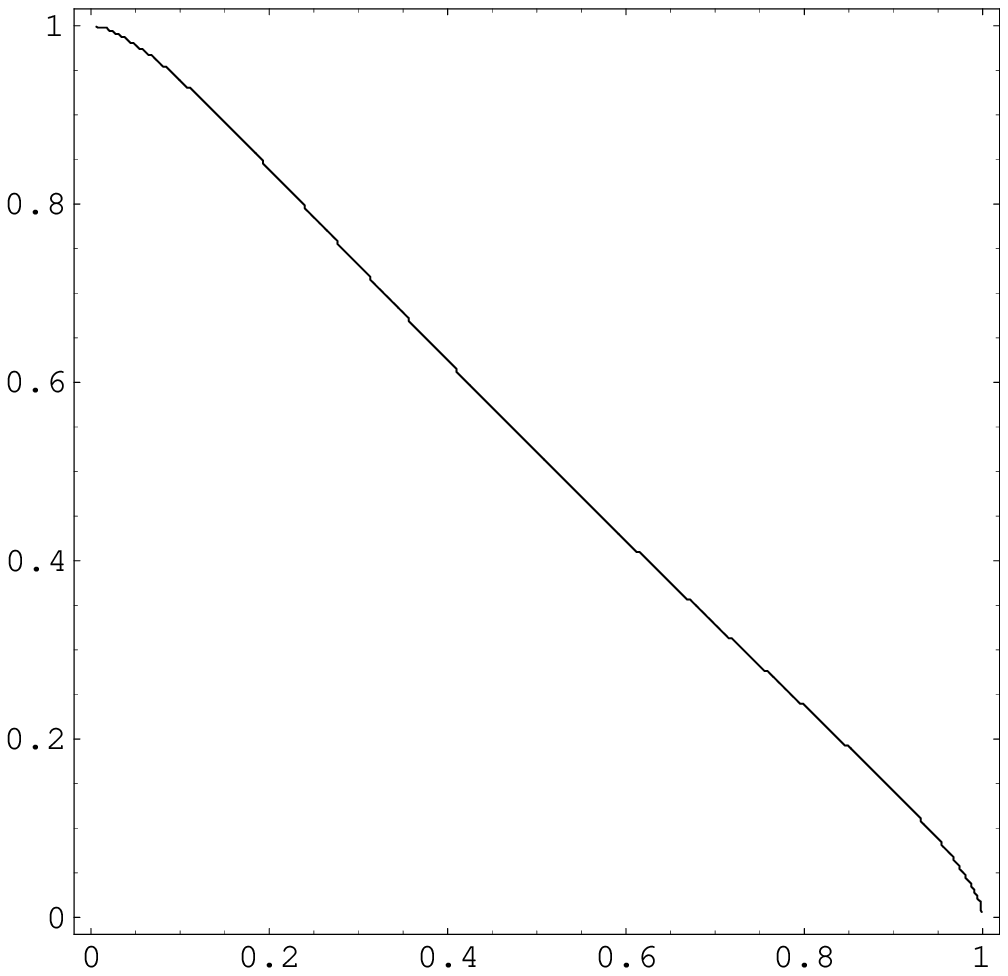}
\includegraphics[width=5cm]{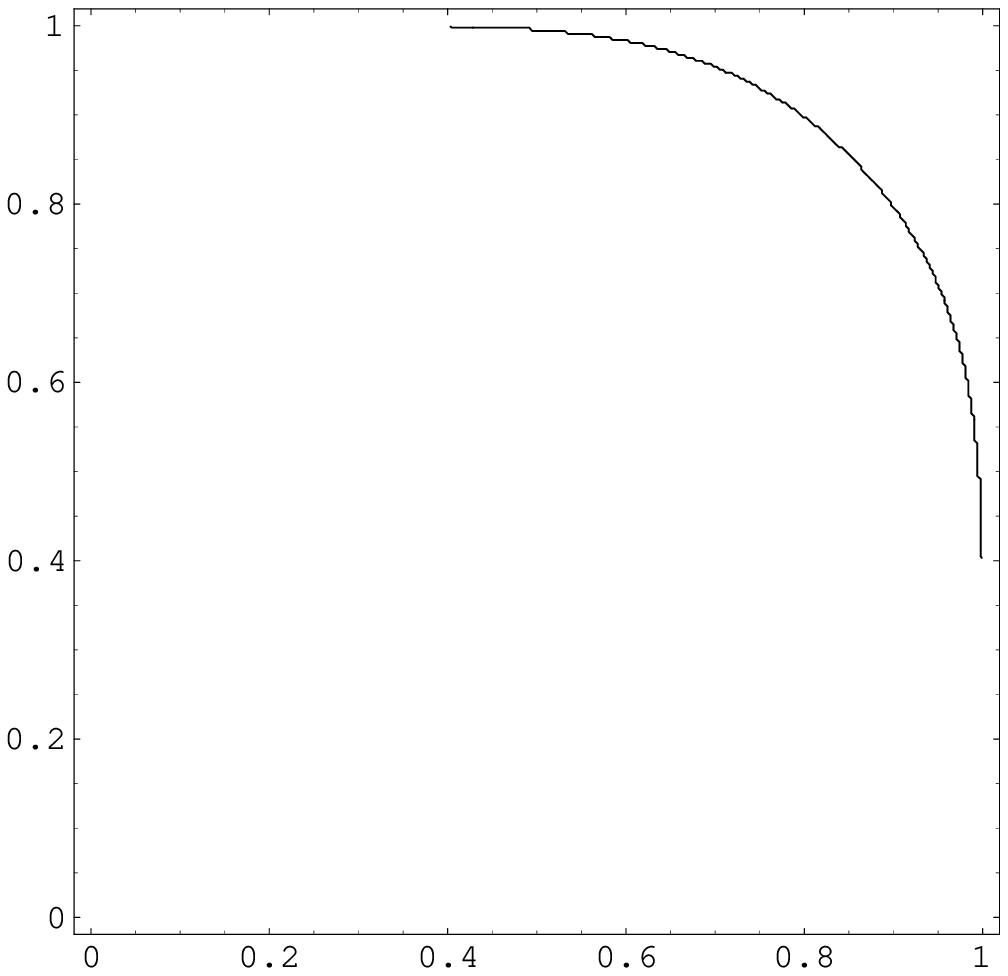}
\end{center}
\caption{The surface $(u,v)\mapsto \mathrm{sign}[C_{\alpha}(u,v)-uv]$ when
$\alpha =5/10$ (left top panel),
$\alpha =7/10$ (center top panel),
and $\alpha =9/10$ (right top panel) with
the zero-curve $Z_{\alpha}$ in the corresponding plots beneath them.}
\label{figure-1}
\end{figure}
Concerning the QDE property, we want to know if and
when the function $\mathcal{C}_{\alpha}$ defined by the formula
\[
\mathcal{C}_{\alpha}(v)=\int_0^1 \big (C_{\alpha}(u,v)- uv\big ) du
\]
is positive or negative. Our experimental analysis
has revealed that this function is negative for some $v\in [0,1]$
and positive for other $v$, and this applies to 
every $\alpha \in (0,)]$. For an illustration,
we have produced Figure \ref{figure-2}.
\begin{figure}[h!]
\bigskip
\begin{center}
\includegraphics[width=5cm]{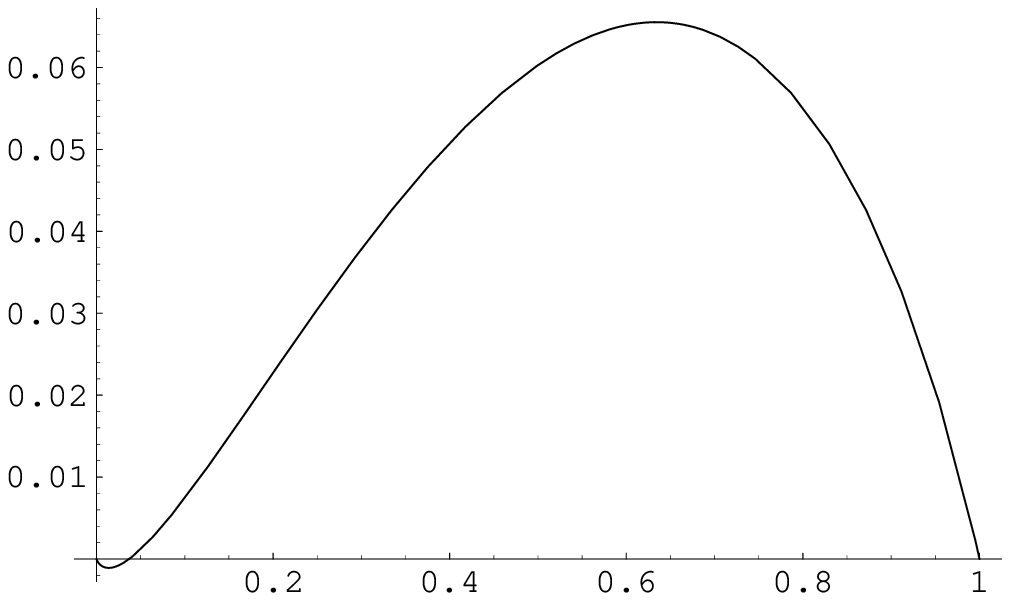}
\includegraphics[width=5cm]{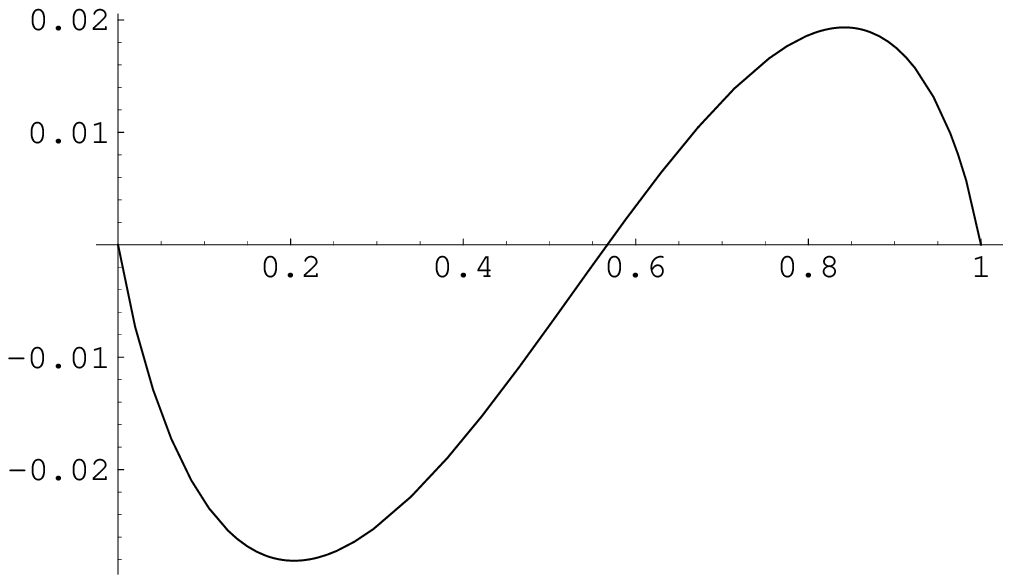}
\includegraphics[width=5cm]{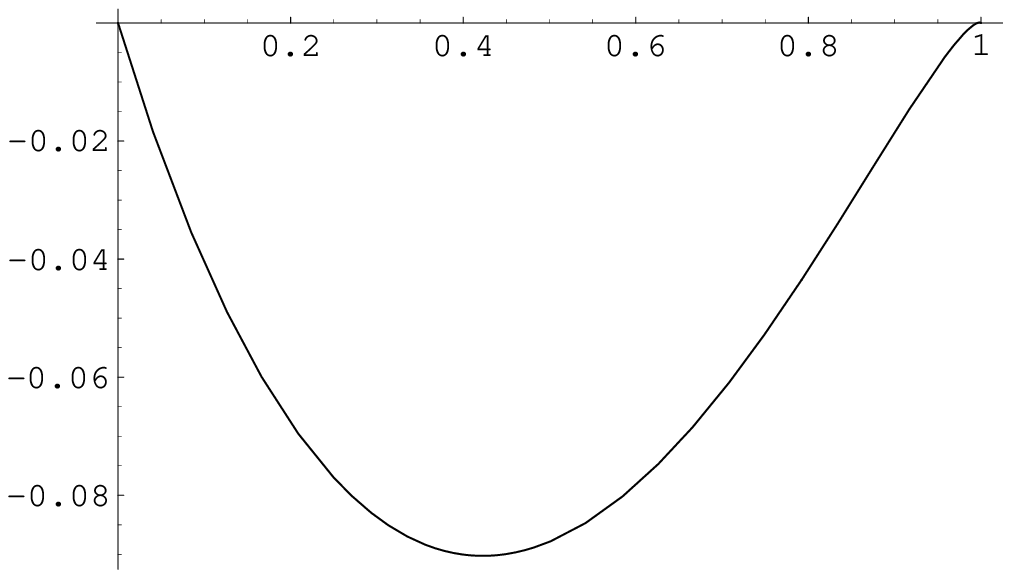}
\end{center}
\caption{The function $v\mapsto \mathcal{C}_{\alpha}(v)$ when
$\alpha =5/10$ (left panel), $\alpha =7/10$ (center panel),
and $\alpha =9/10$ (right panel). In every case, there are regions
(not necessarily clearly visible in the graphs) where the function
is strictly positive and strictly negative.}
\label{figure-2}
\end{figure}
This implies that for the purpose of constructing QDE pairs of
random variables we cannot choose both marginal cdf's uniform. Hence,
we choose only the first marginal cdf
(i.e., that of $X$) uniform. Since the second marginal cdf
(i.e., that of $Y$) cannot be uniform, nor any other continuous cdf,
we construct a discrete cdf $G$ or, equivalently, a discrete
random variable $Y$ in such a way that $U (=X)$ is PQDE on $Y$.

First we note that, for every $v\in (0,1)$,
\begin{itemize}
  \item
$C_{\alpha}(u,v)- uv$ is positive for
some $u\in (0,1)$ and negative for some other $u\in (0,1)$, thus violating
the QD property.
\end{itemize}
To verify this non-QD property rigorously, we first rewrite the copula
$C_{\alpha}(u,v)$ as $\max\{0,P(u,v)\}^{1/{\alpha}}$ with the notation
\[
P(u,v)=1-((1-u^{\alpha})^{1/{\alpha}}+(1-v^{\alpha})^{1/{\alpha}})^{\alpha}.
\]
For every fixed $ v$ and when $u=0$, then we have
$P(0,v)=1-(1+(1-v^{\alpha})^{1/{\alpha}})^{\alpha}<0$.
Thus, $ C_{\alpha}(u,v)=0$ in a neighbourhood of $u=0$.
From this we deduce that $ C_{\alpha}(u,v)-uv<0$ when
$u\in (0,\epsilon)$ for some $\epsilon >0$.
In a neighbourhood of $u=1$, we have that
$C_{\alpha}(u,v)-uv$ is equal to $P(u,v)^{1/{\alpha}}-uv$,
which is a differentiable function.
The partial derivative $(d/du)(P(u,v)^{1/{\alpha}}-uv)$ at the point
$u=1$ is negative. Hence, the function $u\mapsto P(u,v)^{1/{\alpha}}-uv$
is decreasing in a neghbourhood of $1$.
When $u=1$, then we have $P(1,v)^{1/{\alpha}}=v$
and thus $P(1,v)^{1/{\alpha}}-1\cdot v=0$. Hence,
the function $u\mapsto P(u,v)^{1/{\alpha}}-uv$ or, equivalently,
$u\mapsto C_{\alpha}(u,v)-uv$ is positive in a neighbourhood 
to the left of $u=1$. This establishes
the property formulated under the bullet above.

Next, in view of the equation
\begin{align}
\mathbf{Cov}\big [U,\tau_y(Y)\big ]=\mathcal{C}_{\alpha}(G(y)),
\label{cov-iv}
\end{align}
we construct $Y$ such that its support is in an interval
$[v^{*},v^{**}]\subseteq (0,1)$ and
\begin{itemize}
  \item for every $v\in [v^{*},v^{**}]$, we have
$\mathcal{C}_{\alpha}(v) >0$, thus assuring that the PQDE property holds,
provided that $Y$ takes only on values in the interval $[v^{*},v^{**}]$.
\end{itemize}
The construction of the aforementioned $Y$ is as follows.
We choose a set of $K$ points $v_k >v^{*}$
such that $\sum_{k=1}^{K-1} v_k <v^{**}$ and
$\sum_{k=1}^K v_k =1$. Define the cdf $G$ by the formula
\[
G(y)=\sum_{k=1}^{K} v_k \mathbf{1}\{ y_k\le y\},
\]
where $y_1<y_2<\cdots < y_K$ are real numbers. In other words,
the random variable $Y$ takes on
the values $y_k$ with the probabilities $v_k$.
Note that the range of the cdf $G$ is the set
$\{0,v_1, v_1+v_2, \dots, \sum_{k=1}^{K-1} v_k , 1\}$. By construction,
$\mathcal{C}_{\alpha}(v) >0$ for all
$v\in \{v_1, v_1+v_2, \dots, \sum_{k=1}^{K-1} v_k \}$. Hence,
in order to verify that $\mathcal{C}_{\alpha}(G(Y)) \ge 0$ for
all real $y\in \mathbf{R}$, we are only left to check that
$\mathcal{C}_{\alpha}(v) \ge 0$ for $v\in \{ 0,1\}$, but
this holds because $C_{\alpha}(u,0)=0$ and $C_{\alpha}(u,1)=u$.

In summary, we have constructed a pair $(U,Y)$ such that
$\mathbf{Cov}\big [U,\tau_y(Y)\big ]\ge 0$ for all $y\in \mathbf{R}$, that is,
$U (=X)$ is PQDE on $Y$, but the pair is not QD, that is,
it is neither PQD nor NQD.
This concludes Example \ref{example-21}.
\end{example}

\section{QDE-based Gr\"uss-type covariance bounds}
\label{qde-bounds}

From the previous two sections we know that the set of QDE random pairs
is larger than the set of QD pairs. In this sense, establishing
Gr\"uss-type covariance bounds under the QDE assumption would be
an extension of those established under the QD assumption. We explore
such QDE-based results in the current section. In what follows, we
use the notation
\[
\mathbf{A}_k[Z]=\big (\mathbf{E}\big [|Z-\mathbf{E}[Z] |^k\big ]\big )^{1/k}.
\]
The next theorem, whose proof is a consequence of
equation (\ref{cov-general-spec-2}), utilizes the QDE notion and
establishes a sharper bound than
Gr\"uss's original bound (\ref{cov-5e}).

\begin{theorem}\label{th-2.1}
For every pair $p,q\in (1,\infty )$ such that $p^{-1}+q^{-1}=1$,
we have the bound
\begin{equation}
\big |\mathbf{Cov}[X,\beta(Y)] \big |
\le \mathbf{D}_{p}[X,Y]\, \mathbf{G}_{p}[X,Y,\beta ],
\label{cov-5dd}
\end{equation}
where the QDE-based dependence coefficient is
\[
\mathbf{D}_{p}[X,Y]
=\sup_{y}{\big | \mathbf{Cov}\big [X,\tau_y(Y)\big ] \big |
\over \mathbf{A}_p[X] \mathbf{A}_q[\tau_y(Y)] }
\]
with the supremum taken over all $y\in \mathbf{R}$ such that $G(y)\in (0,1)$,
that is, over the support of the random variable $Y$,
and where the QDE-based Gr\"uss factor is
\[
\mathbf{G}_{p}[X,Y,\beta ]=\mathbf{A}_p[X]
\int \mathbf{A}_q[\tau_y(Y)]\, d|\beta|(y).
\]
\end{theorem}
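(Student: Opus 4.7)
The plan is to obtain bound (\ref{cov-5dd}) as a direct consequence of the Cuadras-Hoeffding-type representation (\ref{cov-general-spec-2}) together with the definition of $\mathbf{D}_{p}[X,Y]$. Starting from $\mathbf{Cov}[X,\beta(Y)]=\int \mathbf{Cov}[X,\tau_y(Y)]\, d\beta(y)$ and using the decomposition $\beta=\beta_1-\beta_2$ of the distortion function as a difference of two non-decreasing functions, the signed-measure triangle inequality gives
\[
|\mathbf{Cov}[X,\beta(Y)]| \le \int |\mathbf{Cov}[X,\tau_y(Y)]|\, d|\beta|(y),
\]
where $d|\beta|=d\beta_1+d\beta_2$ is the total-variation measure associated with $\beta$.

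Next I would majorize the integrand using the definition of $\mathbf{D}_p[X,Y]$ as a supremum. For every $y$ with $G(y)\in(0,1)$ we have, by the very definition of that supremum,
\[
|\mathbf{Cov}[X,\tau_y(Y)]| \le \mathbf{D}_{p}[X,Y]\, \mathbf{A}_p[X]\, \mathbf{A}_q[\tau_y(Y)].
\]
For the remaining $y$'s, namely those with $G(y)\in\{0,1\}$, the random variable $\tau_y(Y)$ is almost surely constant, so $\mathbf{Cov}[X,\tau_y(Y)]=0=\mathbf{A}_q[\tau_y(Y)]$ and the inequality holds trivially. Inserting this pointwise bound into the integral and pulling the $y$-independent factor $\mathbf{D}_{p}[X,Y]\,\mathbf{A}_p[X]$ out of the integral yields
\[
|\mathbf{Cov}[X,\beta(Y)]| \le \mathbf{D}_{p}[X,Y]\, \mathbf{A}_p[X] \int \mathbf{A}_q[\tau_y(Y)]\, d|\beta|(y),
\]
which is precisely $\mathbf{D}_{p}[X,Y]\,\mathbf{G}_{p}[X,Y,\beta]$.

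The conjugate exponents $p$ and $q$ with $p^{-1}+q^{-1}=1$ do not actually enter the derivation itself; they matter only through H\"older's inequality applied to the centered product $(X-\mathbf{E}[X])(\tau_y(Y)-\mathbf{E}[\tau_y(Y)])$, which guarantees $\mathbf{D}_p[X,Y]\le 1$ and hence that the estimate is non-vacuous. I do not anticipate a genuine obstacle: the only point needing a brief justification is the passage from $|\int \cdot\, d\beta|$ to $\int |\cdot|\, d|\beta|$, which is routine once $\beta$ has been split into its increasing parts. Measurability of $y\mapsto \mathbf{Cov}[X,\tau_y(Y)]$ and of $y\mapsto \mathbf{A}_q[\tau_y(Y)]$ follows from the monotonicity of $G$, and finiteness of the resulting integral follows from boundedness of $\mathbf{A}_q[\tau_y(Y)]$ together with the bounded-variation assumption on $\beta$, so integrability causes no trouble either.
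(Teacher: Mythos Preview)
Your proposal is correct and follows exactly the route the paper indicates: the paper states that the theorem's ``proof is a consequence of equation (\ref{cov-general-spec-2})'' and gives no further details, so your argument---pass to absolute values against $d|\beta|$, bound the integrand pointwise by the definition of the supremum $\mathbf{D}_p[X,Y]$, and factor out the $y$-independent constants---is precisely the intended derivation. Your remarks on the boundary case $G(y)\in\{0,1\}$ and on the role of H\"older's inequality (ensuring $\mathbf{D}_p\le 1$) are apt and more explicit than the paper itself.
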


Before discussing properties of the QDE-based dependence coefficient
and Gr\"uss's factor, we first show that bound (\ref{cov-5dd}) implies
Gr\"uss's bound (\ref{cov-5e}).

\begin{statement}\label{statem-5.1}
Setting $p=2$ and $\beta(x)=\beta_0(x)\equiv x$, we have that
under the Gr\"uss conditions on $X$ and $Y$,
Gr\"uss's bound (\ref{cov-5e}) follows from Theorem \ref{th-2.1}.
\end{statement}

\begin{proof}
Since $\mathbf{D}_{2}[X,Y]\le 1$,
we only need to show that
\begin{equation}
\mathbf{G}_{2}[X,Y,\beta_0 ]  \le {(A-a)(B-b)\over 4}.
\label{cov-gruss-1}
\end{equation}
Since $G(y)(1-G(y))$ does not exceed $1/4$ and is
equal to $0$ outside the support of $Y$, and since
$Y\in [b,B]\subset \mathbf{R}$ almost surely,
we have that $\mathbf{G}_{2}[X,Y,\beta_0 ]$
does not exceed $\sqrt{\mathbf{Var}[X]}\,(B-b)/2$.
Furthermore, since $X\in [a,A]\subset \mathbf{R}$ almost surely, then
(see, e.g., Zitikis \cite{Zitikis}, p.~16)
$\sqrt{\mathbf{Var}[X]}$ does not exceed $(A-a)/2$. Hence,
bound (\ref{cov-gruss-1}) holds.
\end{proof}

The dependence coefficient $\mathbf{D}_{p}[X,Y]$
is always in the interval $ [0,1]$.
It takes on the value $0$ when $X$ and $Y$ are independent.
Furthermore, the coefficient
achieves its upper bound $1$, as seen from the following statement.

\begin{statement}
The dependence coefficient $\mathbf{D}_{p}[X,Y]$ achieves its upper bound $1$.
\end{statement}

\begin{proof}
Given $Y$, let $X$ be the random variable $X_0$ defined by the equation
\[
X_0=\varepsilon
\big | \tau_{y_0}(Y)-\mathbf{E}[\tau_{y_0}(Y)]\big |^{q/p}
\mathrm{sign}\big ( \tau_{y_0}(Y)-\mathbf{E}[\tau_{y_0}(Y)] \big ) ,
\]
where
\begin{itemize}
\item
the number $y_0>0$ is any but fixed, and
\item
the random variable $\varepsilon $,
independent of $Y$, takes on the two values $\pm 1$
with same probabilities $p=1/2$.
\end{itemize}
The expectation $\mathbf{E}[X_0]$ is
equal to $0$. The absolute value $|X_0|$ is equal to
$\big | \tau_{y_0}(Y)-\mathbf{E}[\tau_{y_0}(Y)]\big |^{q/p}$.
The covariance $\mathbf{Cov}[X_0,\tau_y(Y)]$ is equal to
$\mathbf{A}_q^q[\tau_y(Y)]$. Hence, $\mathbf{D}_{p}[X_0,Y]=1$.
\end{proof}

The magnitude of the coefficient $\mathbf{D}_{p}[X,Y]$ depends on
the dependence structure between $X$ and $Y$,
as well as on the marginal cdf's of the two random variables.
For example, in the case
of independent $X$ and $Y$, we have $\mathbf{D}_{p}[X,Y]=0$.
Less trivial and thus more interesting examples follow.

\begin{example}\label{exa-1}\rm
Consider the convex combination of the lower and upper Fr\'echet copulas
as defined by equation (\ref{conv-1}). Both $X=U$ and $Y=V$ have
uniform distributions on the interval $[0,1]$, and thus
\begin{equation}
\mathbf{A}_p[U]={1\over 2(p+1)^{1/p}}
\quad \textrm{and} \quad
\mathbf{A}_q[\tau_v(V)]=\kappa_q^{1/q}(v) ,
\label{a-2}
\end{equation}
where the function $\kappa_q:[0,1]\to [0,1/2]$ is defined by
\[
\kappa_q(v)=v(1-v)^q+(1-v)v^q .
\]
Furthermore, since $\mathbf{Cov}\big [U,\tau_v(V)\big ]=\mathcal{C}_{\alpha}(v)$
with $\mathcal{C}_{\alpha}(v)$ given by equation (\ref{c-1}), we have that
\begin{equation}
\mathbf{D}_{p}[U,V]
= 2(p+1)^{1/p}\bigg | \alpha - {1\over 2} \bigg |
\sup_{0<v<1}{v(1-v)\over \kappa_q^{1/q}(v) } .
\label{d-1}
\end{equation}
Recall that $q=p/(p-1)$. The QDE-based Gr\"uss factor is
\begin{equation}
\mathbf{G}_{p}[U,V,\beta ]
= {1\over 2(p+1)^{1/p}}
\int_0^1 \kappa_q^{1/q}(v) d|\beta|(v).
\label{gd-1}
\end{equation}
In the special case when $p=2$ and $\beta(v)=\beta_0(v)\equiv v$,
we have that
\begin{align}
\mathbf{D}_{2}[U,V]
&=\sqrt{3}\, \bigg | \alpha - {1\over 2} \bigg |,
\label{h-1}
\\
\mathbf{G}_{2}[U,V,\beta_0 ]
&={1\over 2\sqrt{3}}\int_0^1 \sqrt{v(1-v)} \, dv
={\pi \over 16\sqrt{3}}.
\label{h-2}
\end{align}
Bound (\ref{cov-5dd}) therefore implies that
\begin{equation}
\big |\mathbf{Cov}[X,Y] \big |
\le {\pi \over 16}\, \bigg | \alpha - {1\over 2} \bigg |
\approx 0.19635 \, \bigg | \alpha - {1\over 2} \bigg | ,
\label{cov-5ddw}
\end{equation}
whereas the exact calculation using, for example, the equation
$\mathbf{Cov}[X,Y]=\int_0^1 \mathcal{C}_{\alpha}(v) dv $ and
formula (\ref{c-1}) gives the value
\begin{equation}
\mathbf{Cov}[X,Y]
= {1 \over 6}\, \bigg ( \alpha - {1\over 2} \bigg )
\approx 0.166667 \, \bigg ( \alpha - {1\over 2} \bigg ) .
\label{cov-5ddww}
\end{equation}
This concludes Example \ref{exa-1}.
\end{example}

\begin{example}\label{exa-2}\rm
Consider the convex combination of
the Farlie-Gumbel-Morgenstern and upper-Fr\'echet copulas
as defined by equation (\ref{conv-2}). Just like in the previous
example, both $X=U$ and $Y=V$ have
uniform distributions on the interval $[0,1]$. Thus,
equations (\ref{a-2}) hold in the current case as well.
The covariance $\mathbf{Cov}\big [U,\tau_v(V)\big ]$ is, however,
different: it is equal to $\mathcal{C}_{\alpha}(v)$
given by equation (\ref{c-2}).
Hence, the  QDE-based dependence coefficient is
\begin{equation}
\mathbf{D}_{p}[U,V]
= (p+1)^{1/p}\bigg | {4\over 3}\, \alpha -{1\over 3} \bigg |
\sup_{0<v<1}{v(1-v)\over \kappa_q^{1/q}(v) } .
\label{d-2}
\end{equation}
Recall that $q=p/(p-1)$.
Note that the QDE-based Gr\"uss factor $\mathbf{G}_{p}[U,V,\beta ]$ is
unaffected by the change of the dependence structure
and therefore has the same expression as in
previous Example \ref{exa-1} (see eq.~(\ref{gd-1})).
In the special case when $p=2$, from the above formulas we have that
\begin{equation}
\mathbf{D}_{2}[U,V]
={\sqrt{3}\over 2}\, \bigg | {4\over 3}\, \alpha -{1\over 3}  \bigg | .
\label{h-3}
\end{equation}
Bound (\ref{cov-5dd}) therefore implies that
\begin{equation}
\big |\mathbf{Cov}[X,Y] \big |
\le {\pi \over 32}\, \bigg | {4\over 3}\, \alpha -{1\over 3}  \bigg |
\approx  0.0981748\, \bigg | {4\over 3}\, \alpha -{1\over 3}  \bigg |,
\label{cov-5ddz}
\end{equation}
whereas the exact calculation using the equation
$\mathbf{Cov}[X,Y]=\int_0^1 \mathcal{C}_{\alpha}(v) dv $ and
formula (\ref{c-2}) gives the value
\begin{equation}
\mathbf{Cov}[X,Y]
= {1 \over 12}\, \bigg ( {4\over 3}\, \alpha -{1\over 3}  \bigg )
\approx  0.0833333\, \bigg ( {4\over 3}\, \alpha -{1\over 3}  \bigg ).
\label{cov-5ddzz}
\end{equation}
This concludes Example \ref{exa-2}.
\end{example}

\section{Estimating the QDE-based Gr\"uss factor}
\label{central-moments}

In Example \ref{exa-1} we calculated the QDE-based Gr\"uss factor
$\mathbf{G}_{p}[U,V,\beta ]$ in the case of uniform random
variables $U$ and $V$. In Statement \ref{statem-5.1} we estimated
$\mathbf{G}_{2}[X,Y,\beta_0 ]$ under the Gr\"uss condition on $X$ and $Y$.
In the current section we develop general results that aid in establishing
tight upper bounds for the QDE-based (general) Gr\"uss factor
$\mathbf{G}_{p}[X,Y,\beta]$. Specifically, upon expressing
the quantities $\mathbf{A}_p[X]$ and $\mathbf{A}_q[\tau_y(Y)]$
by the formulas
\[
\mathbf{A}_p[X]=\big ( \mathbf{E}\big [|X-\mu|^p\big ] \big )^{1/p}
\]
and
\[
\mathbf{A}_q[\tau_y(Y)]=\kappa_q^{1/q}(G(y)), 
\]
where the mean $\mu=\mathbf{E}[X]$, the cdf $G$ of $Y$, and the function
\[
\kappa_q(x)=x(1-x)^q+(1-x)x^q,
\]
we see that estimating $\mathbf{G}_{p}[X,Y,\beta]$ relies on
tight bounds for the $p^{\textrm{th}}$ central
moment $\mathbf{E}\big [|X-\mu|^p\big ]$ as well as on the function
$\kappa_q(x)$. Interestingly, as we shall see from Theorem \ref{th-main-1} below,
which is the main result of this section, tight upper bounds for
the $p^{\textrm{th}}$ central moment also
crucially rely on the function $\kappa_p(x)$.

\begin{theorem}\label{th-main-1}
Let $X$ be a random variable with support in $[a,A] $.
Then, for every $p\ge 1$, we have that
\begin{equation}
\mathbf{E}\big [|X-\mu|^p\big ]\leq (A-a)^{p}\kappa_p\bigg ( \frac{\mu-a}{A-a}\bigg ) .
\label{eq-1main}
\end{equation}
Consequently, with the notation $K_p=\sup_{x\in [0,1]} \kappa_p(x)$, we have that 
\begin{equation}
\mathbf{E}\big [|X-\mu|^p\big ]\leq (A-a)^{p}K_p .
\label{eq-2main}
\end{equation}
Furthermore, when $p\to \infty $, we have that
\begin{equation}
(1+p)K_{p}\to e^{-1}.
\label{eq-3main}
\end{equation}
The maximum $K_p$ of the function $\kappa_p(x)$
is achieved at a unique point $x=x_p$ in the interval $[1/(1+p),1/2]$
and thus, by symmetry, also at the point $1-x_p $ in the interval $[1/2,p/(1+p)]$.
The point $x_p$ is such that, when $p\to \infty $,
\begin{equation}
(1+p)x_{p}\to 1.
\label{eq-4main}
\end{equation}
\end{theorem}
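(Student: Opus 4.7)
Inequality (\ref{eq-1main}) is a moment form of the Edmundson--Madansky (two-point) principle. My plan is to exploit convexity of $y\mapsto|y-\mu|^{p}$ on $[a,A]$ for $p\ge 1$: among all laws of $X$ supported in $[a,A]$ with prescribed mean $\mu$, the expectation $\mathbf{E}\big[|X-\mu|^{p}\big]$ is maximised by the two-point law on $\{a,A\}$ with weights $(A-\mu)/(A-a)$ and $(\mu-a)/(A-a)$. Computing this extremal expectation and factoring out $(A-a)^{p}$ after the substitution $t=(\mu-a)/(A-a)$ produces exactly the right-hand side of (\ref{eq-1main}); inequality (\ref{eq-2main}) then follows at once by taking the supremum over $t\in[0,1]$.

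For the remaining assertions I would study $\kappa_{p}$ directly on $[0,1]$. The function vanishes at the endpoints and satisfies $\kappa_{p}(1-x)=\kappa_{p}(x)$, so every maximiser lies in the interior and appears in symmetric pairs about $1/2$. A direct differentiation gives
\[
\kappa_{p}'(x) = (1-x)^{p-1}\big[1-(p+1)x\big] + x^{p-1}\big[p-(p+1)x\big],
\]
and inspecting the two bracketed factors shows $\kappa_{p}'>0$ on $[0,1/(p+1))$ and, by symmetry, $\kappa_{p}'<0$ on $(p/(p+1),1]$. Hence every maximiser is confined to $[1/(p+1),p/(p+1)]$, with at least one representative $x_{p}$ in $[1/(p+1),1/2]$.

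The hard part is the uniqueness of $x_{p}$. My plan is to apply the substitution $u=x/(1-x)$, which bijects $[1/(p+1),1/2]$ onto $[1/p,1]$; after clearing the common factor $(1+u)^{-p}$, the equation $\kappa_{p}'(x)=0$ is equivalent to
\[
P(u):=u^{p}-pu^{p-1}+pu-1=0.
\]
Since $P(1)=0$ records the critical point $x=1/2$, the task reduces to counting zeros of $P$ in $(0,1)$. For $p\ge 3$ one verifies $P''(u)=p(p-1)u^{p-3}(u-(p-2))\le 0$ on $(0,1)$, so $P'$ is non-increasing there, with $P'(0^{+})=p>0$ and $P'(1)=p(3-p)\le 0$; this makes $P$ unimodal on $[0,1]$, and combined with $P(0)=-1$ and $P(1)=0$ it yields exactly one additional zero $u_{p}\in(0,1)$ precisely when $p>3$. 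The sign check $P(1/p)=-(1-p^{-2})/p^{p-2}<0$ then forces $u_{p}>1/p$, i.e., $x_{p}>1/(p+1)$. The remaining range $1\le p\le 3$ is dispatched by a direct verification that $P$ has no zero in $(0,1)$, so $x_{p}=1/2$.

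Finally, (\ref{eq-3main}) and (\ref{eq-4main}) emerge from a local analysis of $P$ near $u=1/p$. The smallness $|P(1/p)|=O(p^{2-p})$ combined with $P'(1/p)\sim p$ (from the same computation that gave $P'(0^+)=p$) and a first-order Taylor expansion of $P$ at $u=1/p$ pin down $u_{p}=1/p+O(p^{1-p})$; hence $pu_{p}\to 1$ and $(1+p)x_{p}=(1+p)u_{p}/(1+u_{p})\to 1$, which is (\ref{eq-4main}). For (\ref{eq-3main}), write $K_{p}=\kappa_{p}(x_{p})=x_{p}(1-x_{p})^{p}+(1-x_{p})x_{p}^{p}$: the second summand is $O(x_{p}^{p})$ and negligible, while $(1-x_{p})^{p}=\exp\big(p\ln(1-x_{p})\big)\to e^{-1}$ using $px_{p}\to 1$ and $px_{p}^{2}\to 0$. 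Multiplying by $(1+p)$ and invoking $(1+p)x_{p}\to 1$ gives $(1+p)K_{p}\to e^{-1}$.
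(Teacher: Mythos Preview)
Your argument is correct and takes a genuinely different route from the paper after the shared opening step.  Both proofs obtain (\ref{eq-1main}) by applying the Edmundson--Madansky inequality to the convex function $f(x)=|x-\mu|^{p}$, and (\ref{eq-2main}) follows at once.  The divergence is in the analysis of $\kappa_{p}$.  The paper keeps the variable $x$, rewrites $\kappa_{p}'(x)=0$ as $(x/(1-x))^{p-1}=((1+p)x-1)/(p-(1+p)x)$, and studies the ratio $R(x)$ of the two sides; an explicit computation of where $R'$ vanishes yields the closed-form point $x_{p}^{*}=\tfrac12\big(1-\sqrt{(p-3)/(p+1)}\big)$ and the two-sided sandwich $1/(p+1)\le x_{p}\le x_{p}^{*}$, from which (\ref{eq-4main}) follows.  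For (\ref{eq-3main}) the paper never uses $x_{p}$ at all: it splits $\kappa_{p}(x)=h(x)+h(1-x)$ with $h(x)=x(1-x)^{p}$ and bounds the two pieces separately to get the explicit squeeze
\[
\frac{1}{p+1}\Big(\frac{p}{1+p}\Big)^{p}\le K_{p}\le \frac{1}{p+1}\Big(\frac{p}{1+p}\Big)^{p}+\frac{1}{2^{1+p}}.
\]
Your substitution $u=x/(1-x)$ reducing to the polynomial $P(u)=u^{p}-pu^{p-1}+pu-1$, together with the concavity $P''\le 0$ on $(0,1)$ for $p\ge 3$, is a neat alternative for uniqueness, and your Taylor/evaluation argument near $u=1/p$ handles both limits at once.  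The trade-off is that the paper's approach delivers explicit non-asymptotic bounds on $x_{p}$ and $K_{p}$ (which it records separately), whereas yours is lighter but only yields the asymptotics.

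One small point worth making explicit in your write-up: after clearing $(1+u)^{-p}$ you actually get $\kappa_{p}'(x)=-(1+u)^{-p}P(u)$, so $\kappa_{p}'$ and $-P$ share sign.  This is what shows, for $p>3$, that $\kappa_{p}'>0$ on $(0,x_{p})$ and $\kappa_{p}'<0$ on $(x_{p},1/2)$, hence that $x_{p}$ (and not the other critical point $x=1/2$) is the maximiser.  With that sign noted, your uniqueness and asymptotic claims go through as stated.
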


\begin{note}\rm
Bound (\ref{eq-1main}) is sharp in the sense that there is a random
variable $X=X_1$ for which the inequality turns into an equality. Namely,
let $X_1$ take on only two values, $a$ and $A$, with the probabilities
$\mathbf{P}[X_1=a]=(A-\mu)/(A-a)$
and $\mathbf{P}[X_1=A]=(\mu-a)/(A-a)$, respectively.
Inequality (\ref{e-m}) becomes an equality.
\end{note}

\begin{note}\rm
When $p=2$, then $K_p=1/4$,
which plays a crucial role in deriving the Gr\"{u}ss bound.
Formulas for $K_p$ for the integers $1\le p \le 6$
are given in Table \ref{table-2}
\begin{table}[h!]
\centering
\[
\begin{array}{|r|r|r|}
\hline
{p} & {x_{p}} & {K_{p}} \\ \hline
{1} & {\frac{1}{2}}={0.50000000000000000000}                         & {\frac{1}{2}}={0.500000000000000000000} \\ \hline
{2} & {\frac{1}{2}}={0.50000000000000000000}                         & {\frac{1}{4}}={0.250000000000000000000} \\ \hline
{3} & {\frac{1}{2}}={0.50000000000000000000}                         & {\frac{1}{8}}={0.125000000000000000000} \\ \hline
{4} & {\frac{3-\sqrt{3}}{6}}={0.21132486540518711775}                & {\frac{1}{12}}={0.083333333333333333333} \\ \hline
{5} & {\frac{3-\sqrt{6\sqrt{10}-15}}{6}}={0.16776573020222127904}    & {\frac{5\sqrt{10}-14}{27}}={0.067088455586736913333} \\ \hline
{6} & {\frac{15-\sqrt{60\sqrt{10}-75}}{30}}={0.14294933504534875025} & {\frac{4\sqrt{10}-5}{135}}={0.056660078819803832059} \\ \hline
{7} & {0.12500637707104845945}                                       & {0.049087405277751670707} \\ \hline
{8} & {0.11111148199402853664}                                       & {0.043304947663997051030} \\ \hline
{9} & {0.10000001858448876931}                                       & {0.038742049800000743380} \\ \hline
{10} & {0.09090909172727279935}                                      & {0.035049389983188641270} \\ \hline
\end{array}
\]
\caption{The values of $x_{p}$ and $K_{p}$ for the integers $1\le p \le 10$.}
\label{table-2}
\end{table}
along with the corresponding values of $x_{p}\in [0,1/2]$.
By symmetry, the maximum $K_p$ is also achieved at the point
$1-x_{p} \in [1/2,1]$. However, throughout this paper
we use $x_{p}$ to denote the only existing point
in the interval $[0,1/2]$ such that
\[
K_{p}=\kappa_p(x_{p}).
\]
(We refer to the proof of Theorem \ref{th-main-1} for the existence
and uniqueness of $x_p$.)
When $p$ becomes large, expressions for $x_{p}$ and $K_p$ become unwieldy,
due to the fact that $x_{p}$ is a certain solution to
a polynomial equation of a high degree, for which explicit solutions
are not known to the best of our knowledge.
For this reason, in Table \ref{table-2} we have provided only
numerical values of $x_{p}$ and $K_p$ for the integers $7\le p \le 10$.
\end{note}

\begin{note}\rm
In the proof of Theorem \ref{th-main-1}
we shall establish lower and upper bounds for $x_{p}$ and $K_{p}$.
Namely, we shall show that
\begin{equation}\label{kp-shape-2m}
\frac{1}{p+1}\left( \frac{p}{1+p}\right) ^{p}\leq K_{p}\leq \frac{1}{p+1}
\left( \frac{p}{1+p}\right) ^{p}+\frac{1}{2^{1+p}}
\end{equation}
and
\begin{equation}\label{xp-vanish-46m}
{1\over p+1}\le x_p\quad
\left\{
  \begin{array}{ll}
    \displaystyle  ={1 \over 2}  & \hbox{ when }\, 1\le p \le 3,\\
    \displaystyle \le {1\over p+1}D_p   & \hbox{ when }\, p>3,
  \end{array}
\right.
\end{equation}
where
\begin{equation}\label{deltap}
D_p = 2\big / \bigg ( 1+\sqrt{p-3 \over p+1} ~\bigg ) .
\end{equation}
Note that $D_p \to 1$ when $p\to \infty $.
Bounds (\ref{xp-vanish-46m}) imply that
$x_{p}\sim 1/(p+1)$ when $p\to \infty $.
\end{note}

We next present a few results under additional assumptions on $X$.
For example, if we have more precise information
about the location of the mean $\mu $ than just $\mu \in [a,A]$
(see, e.g., Zitikis \cite{Zitikis} for a related discussion),
then the following corollary to Theorem \ref{th-main-1} holds.

\begin{corollary}
\label{prop-9}
Let $X$ be a random variable with
support in $[a,A] $, and let $[a^*,A^*]$ be a sub-interval of\,\, $[a,A]$
such that $\mu \in [a^*,A^*]$. Then for every $p\ge 1$ we have that
\begin{equation}
\mathbf{E}\big [|X-\mu|^p\big ]\leq (A-a)^{p}
\max\left\{\kappa_p(x)~:~
x\in \left[\frac{a^*-a}{A-a},\frac{A^*-a}{A-a}\right]\right\}.
\label{pr-1}
\end{equation}
\end{corollary}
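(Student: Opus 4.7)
The plan is to derive this corollary as a direct consequence of the pointwise bound (\ref{eq-1main}) from Theorem \ref{th-main-1}. The only new ingredient is the restriction of $\mu$ to a sub-interval $[a^*,A^*]\subseteq [a,A]$, which translates into a restriction of the argument $(\mu-a)/(A-a)$ to the sub-interval $[(a^*-a)/(A-a),(A^*-a)/(A-a)]\subseteq [0,1]$. Since Theorem \ref{th-main-1} already delivers the sharp estimate
\[
\mathbf{E}\big[|X-\mu|^p\big]\le (A-a)^p\,\kappa_p\!\left(\frac{\mu-a}{A-a}\right),
\]
the corollary is then obtained by bounding the right-hand side uniformly over all admissible values of $\mu$.

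First I would apply Theorem \ref{th-main-1} to the random variable $X$, which is supported in $[a,A]$ by assumption, to obtain the pointwise inequality above. Second, I would observe that the hypothesis $\mu\in[a^*,A^*]$ implies
\[
\frac{\mu-a}{A-a}\in\left[\frac{a^*-a}{A-a},\,\frac{A^*-a}{A-a}\right].
\]
Third, I would replace $\kappa_p((\mu-a)/(A-a))$ by its supremum over this sub-interval, which is attained since $\kappa_p$ is continuous on the compact interval in question; this yields bound (\ref{pr-1}).

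There is no genuine obstacle: the argument is a one-line monotonicity-of-supremum step on top of Theorem \ref{th-main-1}. The only thing worth flagging is that the bound is informative precisely when the sub-interval $[a^*,A^*]$ is strictly smaller than $[a,A]$ and avoids the maximizer of $\kappa_p$ on $[0,1]$ (the point $x_p$ or $1-x_p$ identified in Theorem \ref{th-main-1}); when $x_p$ or $1-x_p$ belongs to $[(a^*-a)/(A-a),(A^*-a)/(A-a)]$, bound (\ref{pr-1}) reduces to (\ref{eq-2main}) and provides no improvement. Consequently, in the write-up I would simply give the two-line derivation and, if desired, add a brief remark emphasizing when the sub-interval information is genuinely exploited.
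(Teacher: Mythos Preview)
Your proposal is correct and follows essentially the same route as the paper: apply the pointwise bound (\ref{eq-1main}) from Theorem~\ref{th-main-1}, observe that $(\mu-a)/(A-a)$ lies in the stated sub-interval, and bound $\kappa_p$ at that point by its maximum over the sub-interval. The paper phrases the first step via the rescaling $Y=(X-a)/(A-a)$ and equation~(\ref{equat-5}), but this is just (\ref{eq-1main}) unpacked, so the arguments coincide.
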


\begin{proof}
Bound (\ref{pr-1}) follows from equation (\ref{equat-5})
and the bound (cf.\, bound (\ref{eq-1main}))
\[
\mathbf{E}\big [|Y-\mu ^{\prime }|^{p}\big ]
\le
\mu ^{\prime }(1-\mu ^{\prime })^{p}+\mu^{\prime }{}^{p}(1-\mu ^{\prime })
=\kappa_p(\mu ^{\prime }),
\]
where
\[
\mu ^{\prime }={\mu -a \over A-a }
\in \left[\frac{a^*-a}{A-a},\frac{A^*-a}{A-a}\right].
\]
This concludes the proof of Corollary \ref{prop-9}.
\end{proof}

In some situations the random variable $X$ might be
symmetric, in which case we have the following proposition.

\begin{proposition}
\label{prop-5}
Let $X$ be symmetric with support in $[a,A] $.
Then, for every $p\ge 1$,
\begin{equation}
\mathbf{E}\big [|X-\mu|^p\big ]\leq  \frac{(A-a)^{p}}{2^{p}}.
\label{equat-5j}
\end{equation}
\end{proposition}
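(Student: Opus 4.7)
The plan is to exploit the fact that symmetry of $X$ together with support in $[a,A]$ forces the mean $\mu$ to coincide with the midpoint of the interval, after which the bound is essentially immediate.

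More precisely, I would first argue that $\mu = (a+A)/2$. Symmetry of $X$ means that $X$ and $2c - X$ have the same distribution for some center $c$, and taking expectations gives $\mu = c$. Since the support is contained in $[a,A]$, reflecting this support about $c$ must produce a set again contained in $[a,A]$, which forces $c = (a+A)/2$. Consequently
\begin{equation*}
|X - \mu| \le \frac{A-a}{2} \quad \text{almost surely},
\end{equation*}
from which raising to the $p^{\text{th}}$ power and taking expectations yields bound (\ref{equat-5j}) immediately.

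Alternatively, one can deduce Proposition \ref{prop-5} as a direct corollary of Theorem \ref{th-main-1}: with $\mu = (a+A)/2$ the normalized mean $(\mu-a)/(A-a)$ equals $1/2$, and a quick calculation gives
\begin{equation*}
\kappa_p\!\left(\tfrac{1}{2}\right) = \tfrac{1}{2}\cdot\tfrac{1}{2^p} + \tfrac{1}{2}\cdot\tfrac{1}{2^p} = \tfrac{1}{2^p},
\end{equation*}
so bound (\ref{eq-1main}) reduces exactly to (\ref{equat-5j}).

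There is no real obstacle here; the only subtle point is justifying that symmetry plus a bounded support interval $[a,A]$ pins the center to $(a+A)/2$, but this is a one-line argument. I would present the direct argument (first version) as the main proof since it is shorter and avoids invoking the heavier machinery of Theorem \ref{th-main-1}, and then note the alternative derivation from Theorem \ref{th-main-1} as a remark.
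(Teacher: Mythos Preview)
Your main argument contains a genuine gap: the claim that symmetry together with support contained in $[a,A]$ forces $\mu=(a+A)/2$ is false. Take for instance $X$ uniform on $[a,a+\varepsilon]$ with $0<\varepsilon<A-a$; this $X$ is symmetric about $\mu=a+\varepsilon/2$, its support lies in $[a,A]$, yet $\mu\neq(a+A)/2$. The step ``reflecting this support about $c$ must produce a set again contained in $[a,A]$, which forces $c=(a+A)/2$'' is the problem: the reflected support is indeed contained in $[a,A]$, but that places no constraint on $c$ unless the support \emph{equals} $[a,A]$, which is not assumed. Your alternative route through Theorem \ref{th-main-1} inherits the same flaw, since it too requires $(\mu-a)/(A-a)=1/2$.

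The fix is close to what you wrote, and it is essentially the paper's argument. Symmetry of $X$ about $\mu$ means $X-\mu$ and $-(X-\mu)$ have the same distribution, hence the same (almost-sure) range. Since $X-\mu\in[-(\mu-a),\,A-\mu]$ a.s., we also have $X-\mu\in[-(A-\mu),\,\mu-a]$ a.s., and intersecting gives
\[
|X-\mu|\le\min(\mu-a,\,A-\mu)\le\frac{A-a}{2}\quad\text{a.s.}
\]
From here your final step (raise to the $p$th power and take expectations) goes through unchanged. So the overall strategy is right; only the justification of the key pointwise bound $|X-\mu|\le(A-a)/2$ needs to be replaced by the argument above rather than the incorrect midpoint claim.
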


\begin{proof}
Denote $Y=(X-a)/(A-a)$. The random variable $Y$ has support
in $[0,1]$ and its mean is $\mu ^{\prime }=(\mu -a)/(A-a)$. Hence,
\begin{equation}
\frac{\mathbf{E}\big [|X-\mu|^p\big ]}{(A-a)^{p}}
=\mathbf{E}\big [|Y-\mu ^{\prime }|^{p}\big ].
\label{equat-5}
\end{equation}
Since $Y$ is symmetric (around its mean $\mu ^{\prime }$),
and the mean $\mu ^{\prime }$ is in the interval $[0,1]$, we have that
$|Y-\mu ^{\prime }|$ does not exceed $1/2$. Hence, the right-hand
side of equation (\ref{equat-5}) does not exceed $1/2^p$.
This concludes the proof of Proposition \ref{prop-5}.
\end{proof}

\begin{proof}[Proof of Theorem \ref{th-main-1}]
Edmundson \cite{Edmundson} proved that if
$f$ is a convex function and $X$ is a random variable with support
in $[a,A] $, then
\begin{equation}\label{e-m}
\mathbf{E}[f(X)]\leq\frac{A-\mu}{A-a}f(a)+\frac{\mu-a}{A-a}f(A),
\end{equation}
where $\mu$ is the mean of $X$.
This result was subsequently extended by Madansky \cite{Madansky} and is
nowadays known as the Edmundson-Madansky inequality.
Since the function $f(x)=|x-\mu|^p$ is convex for every $p\ge 1$,
the Edmundson-Madansky inequality gives
bound (\ref{eq-1main}). Bound (\ref{eq-2main}) follows trivially.
Statement (\ref{eq-3main})  follows from the fact that
$(p/(1+p))^{p}\to e^{-1}$ and the bound
\begin{equation}\label{kp-shape-2}
\frac{1}{p+1}\left( \frac{p}{1+p}\right) ^{p}\leq K_{p}\leq \frac{1}{p+1}
\left( \frac{p}{1+p}\right) ^{p}+\frac{1}{2^{1+p}},
\end{equation}
which holds for every $p\ge 1$ as we shall next prove.
We start with the upper bound and show that, for every $x\in \lbrack 0,1]$,
\begin{equation}\label{kp-shape-3}
\kappa_p(x)\leq \frac{1}{p+1}\left( \frac{p}{1+p}\right) ^{p}+\frac{1}{2^{1+p}}.
\end{equation}
Since $\kappa_p(x)=\kappa_p(1-x)$, we only need to check
bound (\ref{kp-shape-3}) for $x\in [0,1/2]$.
With the notation $h(x)=x(1-x)^{p}$ we have that
\[
\kappa_p(x)=h(x)+h(1-x).
\]
On the interval $[0,1/2]$, the function $h(x)$ achieves its maximum
at the point $x=1/(p+1)$, and so we have that
\begin{equation}\label{hhh-1}
h(x)\leq \frac{1}{p+1}\left( \frac{p}{1+p}\right) ^{p}.
\end{equation}
On the other hand, the function $h(1-x)$ is increasing on
the interval $[0,1/2]$, and so it achieves its maximum at the point $x=1/2$,
thus giving the bound
\begin{equation}\label{hhh-2}
h(1-x)\leq \dfrac{1}{2^{p+1}} \, .
\end{equation}
Adding up bounds (\ref{hhh-1}) and (\ref{hhh-2}),
we obtain bound (\ref{kp-shape-3}).

To establish the lower bound of (\ref{kp-shape-2}),
we first note that $\kappa_p(x)\ge h(x)$, and thus $K_p\ge h(x)$.
Since the function $h(x)$ achieves its maximum on the interval $[0,1/2]$
at the point $x=1/(p+1)$, bound $K_p\ge h(x)$ implies
the lower bound of (\ref{kp-shape-3}). This completes the proof
of the two bounds of (\ref{kp-shape-2}).

To prove that $(1+p)x_{p}\to 1$ when $p\to \infty $, we show that
\begin{equation}\label{xp-vanish-47}
{\frac{1}{p+1}} \le x_p\le {1\over p+1} D_p ,
\end{equation}
where $D_p$ is defined by equation (\ref{deltap}).
The lower bound of (\ref{xp-vanish-47}) is obvious when $p=1$.
Hence, from now on we consider the case $p>1$. We want to find
those $x\in [0,1/2]$ that maximize the function $\kappa_p(x)$.
We check that $\kappa_p'(x)=0$ holds if and only if
\begin{equation}\label{xp-vanish-3}
\bigg ( {x\over 1-x}\bigg )^{p-1}= { (1+p)x-1 \over p-(1+p)x}.
\end{equation}
The left-hand side of equation (\ref{xp-vanish-3}) is non-negative
for all $x\in [0,1/2]$, whereas the right-hand side is non-negative
only when $x\ge 1/(p+1)$.
This implies that if we find a point $x\in [0,1/2]$ such that
the function $\kappa_p(x)$ is maximized, then the point must be
such that $x\ge 1/(p+1)$, thus implying
the left-hand bound of (\ref{xp-vanish-47}).

Both sides of equation (\ref{xp-vanish-3}) are increasing functions
on the interval  $[1/(p+1),1/2]$. Both sides are equal to $1$ at
the end point $x=1/2$. Hence, the existence and
uniqueness of $x_p\in [1/(p+1),1/2)$ is equivalent
to showing that the two functions intersect only once
on the interval $[1/(p+1),1/2)$, but if they do not intersect, then
we have $x_p=1/2$. We determine this by checking if the ratio
\begin{equation}\label{xp-vanish-45}
R(x)={ (1+p)x-1 \over p-(1+p)x}
\Big /
\bigg ( {x\over 1-x}\bigg )^{p-1}
\end{equation}
crosses the horizontal line $\{(x,1): x\in [1/(p+1),1/2]\}$
only once, provided that it crosses at all. Note that $R(1/(p+1))=0$
and $R(1/2)=1$. Furthermore, the derivative $R'(x)$ 
is always positive on the interval $[1/(p+1),1/2)$ when
$1\le p \le 3$. Hence, the function $R(x)$ achieves its maximum at the end
point $x=1/2$, implying that $x_p=1/2$ when $1\le p \le 3$.
When $p>3$, then the derivative $R'(x)$ is
positive on $[1/(p+1),x_p^*)$, negative on $(x_p^*,1/2)$,
and vanishes at $x=x_p^*$, where
\begin{equation}\label{xp-vanish-46}
x_p^*={1\over 2} \bigg ( 1-\sqrt{p-3 \over p+1} \bigg ).
\end{equation}
Since $R(1/(p+1))=0$ and $R(1/2)=1$, we therefore
conclude that when $p> 3$, then the function $R(x)$ is increasing on the interval
$[1/(p+1),x_p^*]$ with the initial value $R(1/(p+1))=0$,
and then, once it reaches its maximum at the point $x_p^*$,
becomes decreasing on the interval $[x_p^*,1/2]$
with the final value $R(1/2)=1$. Since the final
value is $1$, we conclude that the function $R(x)$ has
crossed the horizontal line $\{(x,1): x\in [1/(p+1),1/2]\}$
exactly once. The crossing point is $x_p$ because it maximizes
the function $\kappa_p(x)$.
This proves Theorem \ref{th-main-1}.
\end{proof}

\section{Regression-based covariance bounds}
\label{regression}

In this section, we look at the covariance $\mathbf{Cov}[X,\beta(Y)] $
from a slightly different angle. First, we write the equation
\begin{equation}
\mathbf{Cov}[X,\beta(Y)]
=\mathbf{Cov}[\alpha(Y),\beta(Y)],
\label{cov-5vi}
\end{equation}
where
\[
\alpha(y)=r_{X|Y}(y)\equiv \mathbf{E}[X|Y=y]-\mathbf{E}[X],
\]
which is the centered regression function.
The right-hand side of equation (\ref{cov-5vi}) complicates
the problem by introducing an additional distortion function
but it also simplifies the problem by reducing the pair $(X,Y)$ to $(Y,Y)$.
Nevertheless, the following theorem, whose proof is based on
an application of H\"older's inequality on the right-hand
side of equation (\ref{cov-5vi}), offers a sharper bound than
Gr\"uss's bound (\ref{cov-5e})
by utilizing a regression-based dependence coefficient
\[
\mathbf{\Delta}_p[X,Y]={\mathbf{A}_p[r_{X|Y}(Y)] \over \mathbf{A}_p[X] } .
\]
We shall discuss properties of the coefficient later in this section.

\begin{theorem}\label{th-2.2}
For every pair $p,q\in (1,\infty )$ such that $p^{-1}+q^{-1}=1$,
we have the bound
\begin{equation}
\big |\mathbf{Cov}[X,\beta(Y)] \big |
\le \mathbf{\Delta}_p[X,Y]\, \mathbf{\Gamma}_{p}[X,Y,\beta ],
\label{cov-5hh}
\end{equation}
where
\begin{equation}
\mathbf{\Gamma}_{p}[X,Y,\beta ]=\mathbf{A}_p[X]\mathbf{A}_q[\beta(Y)]
\label{cov-5gg}
\end{equation}
is the regression-based Gr\"uss factor of the bound.
\end{theorem}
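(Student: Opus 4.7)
The plan is to start directly from the representation (\ref{cov-5vi}), which reduces the two-variable covariance to one on the single random variable $Y$:
\[
\mathbf{Cov}[X,\beta(Y)] = \mathbf{E}\big[\mathbf{E}[X|Y]\beta(Y)\big] - \mathbf{E}[X]\mathbf{E}[\beta(Y)] = \mathbf{Cov}[r_{X|Y}(Y),\beta(Y)],
\]
which is nothing more than the tower property of conditional expectation. The key small observation is that the centered regression function $r_{X|Y}(Y) = \mathbf{E}[X|Y]-\mathbf{E}[X]$ has mean zero, so that $\mathbf{A}_p[r_{X|Y}(Y)]$ reduces to the plain $L^p$-norm $(\mathbf{E}[|r_{X|Y}(Y)|^p])^{1/p}$ and the covariance on the right simplifies to the single expectation $\mathbf{E}[r_{X|Y}(Y)(\beta(Y)-\mathbf{E}[\beta(Y)])]$.

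Next I would apply H\"older's inequality with conjugate exponents $p$ and $q$ to the two factors $r_{X|Y}(Y)$ and $\beta(Y)-\mathbf{E}[\beta(Y)]$, obtaining
\[
\big|\mathbf{Cov}[X,\beta(Y)]\big| \le \big(\mathbf{E}[|r_{X|Y}(Y)|^p]\big)^{1/p}\,\big(\mathbf{E}[|\beta(Y)-\mathbf{E}[\beta(Y)]|^q]\big)^{1/q} = \mathbf{A}_p[r_{X|Y}(Y)]\,\mathbf{A}_q[\beta(Y)].
\]
Finally, I would multiply and divide the right-hand side by $\mathbf{A}_p[X]$, so that the first factor becomes $\mathbf{\Delta}_p[X,Y]\,\mathbf{A}_p[X]$ by definition of the regression-based dependence coefficient, and the product $\mathbf{A}_p[X]\,\mathbf{A}_q[\beta(Y)]$ becomes $\mathbf{\Gamma}_p[X,Y,\beta]$ by (\ref{cov-5gg}), yielding precisely bound (\ref{cov-5hh}).

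I do not anticipate any real obstacle: once one has (\ref{cov-5vi}) on the table, the argument is a single application of H\"older plus a cosmetic rearrangement. The only items that deserve a line of care are (i) noting that $r_{X|Y}(Y)$ is centered, which is what makes the first H\"older factor collapse to $\mathbf{A}_p[r_{X|Y}(Y)]$ rather than the larger $L^p$-norm of $\mathbf{E}[X|Y]$ itself, and (ii) the implicit integrability assumption that $|X|^p$ and $|\beta(Y)|^q$ are integrable, which is in keeping with the finiteness conditions used throughout the paper to give meaning to the Cuadras--Hoeffding-type representations.
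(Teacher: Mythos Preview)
Your proposal is correct and follows precisely the route the paper indicates: the paper states that the proof ``is based on an application of H\"older's inequality on the right-hand side of equation (\ref{cov-5vi})'', and what you have written spells out exactly those details, including the observation that $r_{X|Y}(Y)$ is centered so that the H\"older factor becomes $\mathbf{A}_p[r_{X|Y}(Y)]$, followed by the cosmetic insertion of $\mathbf{A}_p[X]$.
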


We next show that bound (\ref{cov-5hh}) implies
Gr\"uss's bound (\ref{cov-5e}).

\begin{statement}
Setting $p=2$ and $\beta(x)=\beta_0(x)\equiv x$, we have that
under the Gr\"uss conditions on $X$ and $Y$,
Gr\"uss's bound (\ref{cov-5e}) follows from Theorem \ref{th-2.2}.
\end{statement}

\begin{proof}
Since $\mathbf{\Delta}_{2}[X,Y]\le 1$,
we only need to show that
\begin{equation}
\mathbf{\Gamma}_{2}[X,Y,\beta_0 ]  \le {(A-a)(B-b)\over 4}.
\label{cov-gruss-1o}
\end{equation}
Since $X\in [a,A]\subset \mathbf{R}$ almost surely, then
(see, e.g., Zitikis \cite{Zitikis}, p.~16)
$\mathbf{A}_2[X]=\sqrt{\mathbf{Var}[X]}$
does not exceed $(A-a)/2$. Likewise,
when $Y\in [b,B]\subset \mathbf{R}$ almost surely, then
$\mathbf{A}_2[\beta(Y)]=\sqrt{\mathbf{Var}[Y]}$
does not exceed $(B-b)/2$. Bound (\ref{cov-gruss-1o}) follows.
\end{proof}

We next discuss properties of the regression-based dependence coefficient
$\mathbf{\Delta}_p[X,Y]$. Note first that the coefficient is
always in the interval $ [0,1]$. Furthermore,
when $X$ and $Y$ are independent, then $\mathbf{\Delta}_p[X,Y]=0$, and
when $X=Y$ almost surely, then $\mathbf{\Delta}_p[X,Y]=1$.

When the pair $(X,Y)$ follows the bivariate normal distribution,
then the centered regression function takes on the form
\begin{equation}
r_{X|Y}(y)={\mathbf{Cov}[X,Y]\over \mathbf{Var}[Y]}\big (y-\mathbf{E}[Y] \big ),
\label{cov-regr-1}
\end{equation}
and we therefore have the equation
\[
\mathbf{\Delta}_p[X,Y]={\mathbf{Cov}[X,Y]\over \mathbf{Var}[Y]}
{\mathbf{A}_p[Y] \over \mathbf{A}_p[X] } .
\]
In particular, when $p=2$, since $\mathbf{A}_2[X]=\sqrt{\mathbf{Var}[X]}$ and
$\mathbf{A}_2[\beta(Y)]=\sqrt{\mathbf{Var}[Y]}$,
we have that the regression-based dependence coefficient
$\mathbf{D}_2[X,Y]$ is equal to
the Pearson correlation coefficient $\mathbf{Corr}[X,Y]$.
Furthermore, an application of equation (\ref{cov-regr-1})
on the right-hand side of equation (\ref{cov-5vi})
gives equation (\ref{cov-10i}), which, assuming that
$\beta $ is differentiable, in turn gives equation (\ref{cov-10stein}).

\section*{Acknowledgements}

The four authors have been supported by
the research grant FRG1/10-11/012 from Hong Kong Baptist University (HKBU)
under the title ``The Covariance Sign of Transformed Random Variables
with Applications to Economics, Finance and Insurance''.
The authors also gratefully acknowledge their partial research support by
the Agencia Nacional de Investigaci\'{o}n e Innovaci\'{o}n (ANII) of Uruguay,
the Research Grants Council (RGC) of Hong Kong,
and the Natural Sciences
and Engineering Research Council (NSERC) of Canada.

\end{document}